\newcommand{\reals}{\mathbb{R}}
\newcommand{\complex}{\mathbb{C}}
\newcommand{\bracketa}[1]{\big[#1\big]}
\newcommand{\angles}[1]{\left\langle #1 \right\rangle}
\newcommand{\paraa}[1]{\big(#1\big)}
\newcommand{\parab}[1]{\Big(#1\Big)}
\newcommand{\parac}[1]{\bigg(#1\bigg)}
\newcommand{\diag}{\operatorname{diag}}
\newcommand{\spacearound}[1]{\quad#1\quad}
\newcommand{\equivalent}{\spacearound{\Leftrightarrow}}
\newcommand{\qtext}[1]{\quad\text{#1}\quad}
\newcommand{\qand}{\qtext{and}}
\newcommand{\mathand}{\qand}
\newtheorem{theorem}{Theorem}[section]
\newtheorem{corollary}[theorem]{Corollary}
\newtheorem{lemma}[theorem]{Lemma}
\newtheorem{proposition}[theorem]{Proposition}
\theoremstyle{definition}
\newtheorem{definition}[theorem]{Definition}
\theoremstyle{remark}
\newtheorem{remark}[theorem]{Remark}
\numberwithin{equation}{section}
\newcommand{\half}{\frac{1}{2}}
\renewcommand{\d}{\partial}
\newcommand{\eps}{\varepsilon}
\newcommand{\T}{\mathcal{T}}
\newcommand{\rh}{r_{\hbar}}
\newcommand{\qh}{q_{\hbar}}
\newcommand{\ph}{\hat{p}}
\newcommand{\Aeq}[1]{\textsf{A}$_{#1}$}
\newcommand{\Beq}[1]{\textsf{B}$_{#1}$}
\newcommand{\Cgh}{\mathcal{C}_{\hbar}^g}
\newcommand{\xh}{\hat{x}}
\newcommand{\fh}{f_{\hbar}}
\newcommand{\Mat}{\operatorname{Mat}}
\newcommand{\C}{\mathcal{C}}
\title[]{Low dimensional matrix representations for noncommutative surfaces of arbitrary genus}
\author{Joakim Arnlind}
\address[Joakim Arnlind]{Dept. of Math.\\
Link\"oping University\\
581 83 Link\"oping\\
Sweden}
\email{joakim.arnlind@liu.se}
\begin{document}

\begin{abstract}
  In this note, we initiate a study of the finite-dimensional
  representation theory of a class of algebras that correspond to
  noncommutative deformations of compact surfaces of arbitrary
  genus. Low dimensional representations are investigated in detail
  and graph representations are used in order to understand the
  structure of non-zero matrix elements. In particular, for arbitrary
  genus greater than one, we explicitly construct classes of
  irreducible two and three dimensional representations. The existence
  of representations crucially depends on the analytic structure of the
  polynomial defining the surface as a level set in $\reals^3$.
\end{abstract}

\maketitle

\section{Introduction}

\noindent
Understanding the geometry of noncommutative space is believed to be
crucial in order to approach a quantum theory of gravity. Both String
theory (via the IKKT model \cite{ikkt:superstring}) and the matrix
regularization of Membrane theory
\cite{h:phdthesis,dwhn:supermembranes}, being candidates for
describing quantum effects in gravity, contain noncommutative (matrix)
analogues of 2-dimensional manifolds. For compact surfaces, one
considers sequences of matrix algebras of increasing dimension,
converging (in a certain sense
\cite{h:phdthesis,bhss:glinfinity,ahh:multilinear}) to the Poisson
algebra of functions on the surface.  By now, surfaces of genus zero
and one are quite well understood (see
e.g. \cite{h:phdthesis,ffz:trigonometric,h:diffT,h:diffeomorphism,m:fuzzy.sphere}),
but understanding the case of higher genus has turned out to be more
difficult. Although there are several results that treat the case of
higher genus and prove the existence of matrix algebras converging to
the algebra of smooth functions (see
e.g. \cite{kl:quantumriemannI,kl:quantumriemannII,kl:quantumriemannIII,bms:toeplitz,nn:quantum.surfaces}),
explicit representations, as well as an algebraic understanding of
these objects, are still lacking to a large extent. Other interesting
approaches to matrix regularizations have also appeared which focus
slightly more on approximation properties, as well as connections to
physics, and how geometry emerges from limits of matrix algebras (see
e.g. \cite{h:membrane.topology,s:emergent.geometry,s:construction.kit}).

In \cite{abhhs:fuzzy,abhhs:noncommutative} a class of algebras,
defined by generators and relations, was given as a candidate for
noncommutative analogues of compact surfaces of arbitrary genus. A
one-parameter family of surfaces interpolating between spheres and
tori was thoroughly investigated and all finite-dimensional
representations were classified. However, only marginal progress was
made in understanding if the proposed relations are consistent and
tractable for the higher genus case, and no concrete representations
were constructed. In this note, we study low dimensional
representations of these algebras for arbitrary genus greater than
one.  One should expect, due to the high polynomial order of the
defining relations, that the representation theory is quite
complicated and to better understand its structure, we shall make use
of graph methods to describe non-zero matrix elements; a method which
has previously turned out to be most helpful in understanding
finite-dimensional representations
\cite{a:phdthesis,a:repcalg,abhhs:fuzzy,abhhs:noncommutative,ah:dmsa}. Via
these graphs, one can easily derive conditions which may be used to
exclude certain matrices from being representations.

Two dimensional representations are studied in detail, and even this
simple case turns out to be rather complicated, indicating what
to expect in the higher dimensional case. In particular, we show that
one can construct 2-dimensional representations for any genus
$g\geq 2$ and any value of the deformation parameter $\hbar>0$. The
existence of these representations depends crucially on the analytic
structure of a polynomial defining the surface as a level set in
$\reals^3$.

\section{Compact genus $g$ surfaces in $\reals^3$ as level sets}

\noindent
Let us recall how a class of compact surfaces of arbitrary genus may
be constructed as level sets in $\reals^3$
\cite{hofer:phdthesis,abhhs:fuzzy,abhhs:noncommutative}. Let $g\geq 1$
be an integer and set
\begin{align*}
  G(t) = \prod_{k=1}^g(t-k^2)\qquad
  M=\!\!\!\!\max_{0\leq t\leq g^2+1}\!\!\!G(t).
\end{align*}
For arbitrary $c>0$ and $\alpha\in(0,2\sqrt{c}/M)$ set
\begin{align*}
  p(x) = \alpha G(x^2)-\sqrt{c}
  =\alpha\prod_{k=1}^g(x^2-k^2)-\sqrt{c}
\end{align*}
and
\begin{align*}
  C(x,y,z) = \half\paraa{p(x)+y^2}^2+\half z^2-\half c.
\end{align*}
Using Morse theory is is straightforward to show that the level set
$\Sigma_g=C^{-1}(0)$ is a compact surface of genus $g$. An example of
a surface of genus 3 is given in Figure~\ref{fig:genus.3}.  In the
next section, we shall start by investigating some of the properties
of the polynomial $p(x)$, which will later become important when
proving existence of representations.

\begin{figure}[h]
  \centering
  \includegraphics[height=4cm]{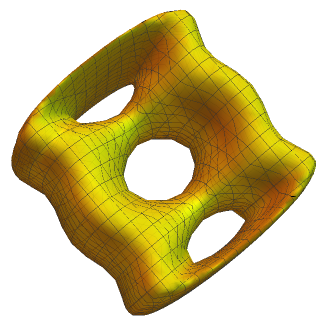}
  \caption{An example of $\Sigma_3$ as a level set in $\reals^3$.}
  \label{fig:genus.3}
\end{figure}

\subsection{Properties of $p(x)$}

First of all, one notes that
\begin{align*}
  p(k) = -\sqrt{c}<0
\end{align*}
for $k=1,2,\ldots,g$ and
\begin{align*}
  % &p(x)=\alpha\prod_{k=1}^g(x^2-k^2)-\sqrt{c}\qquad
  %   (\alpha,c>0)\\
  &p'(x) = 2\alpha x\sum_{k=1}^g\prod_{l=1,l\neq k}^g(x^2-l^2)
\end{align*}
giving
\begin{align}\label{eq:pp.integer.value}
  p'(k) = 2\alpha k\prod_{l=1,l\neq k}^g(k^2-l^2)
  =(-1)^{g-k}\,\frac{\alpha(g+k)!(g-k)!}{k}
\end{align}
for $k=1,2,\ldots,g$. Furthermore, $p(-x)=x$ and $p'(-x)=-p'(x)$. As
an illustration of the typical behavior of the polynomial, a plot of
$p(x)$ can be found in Figure~\ref{fig:px.g4}.

\begin{figure}[h]
  \centering
  \includegraphics[width=7cm]{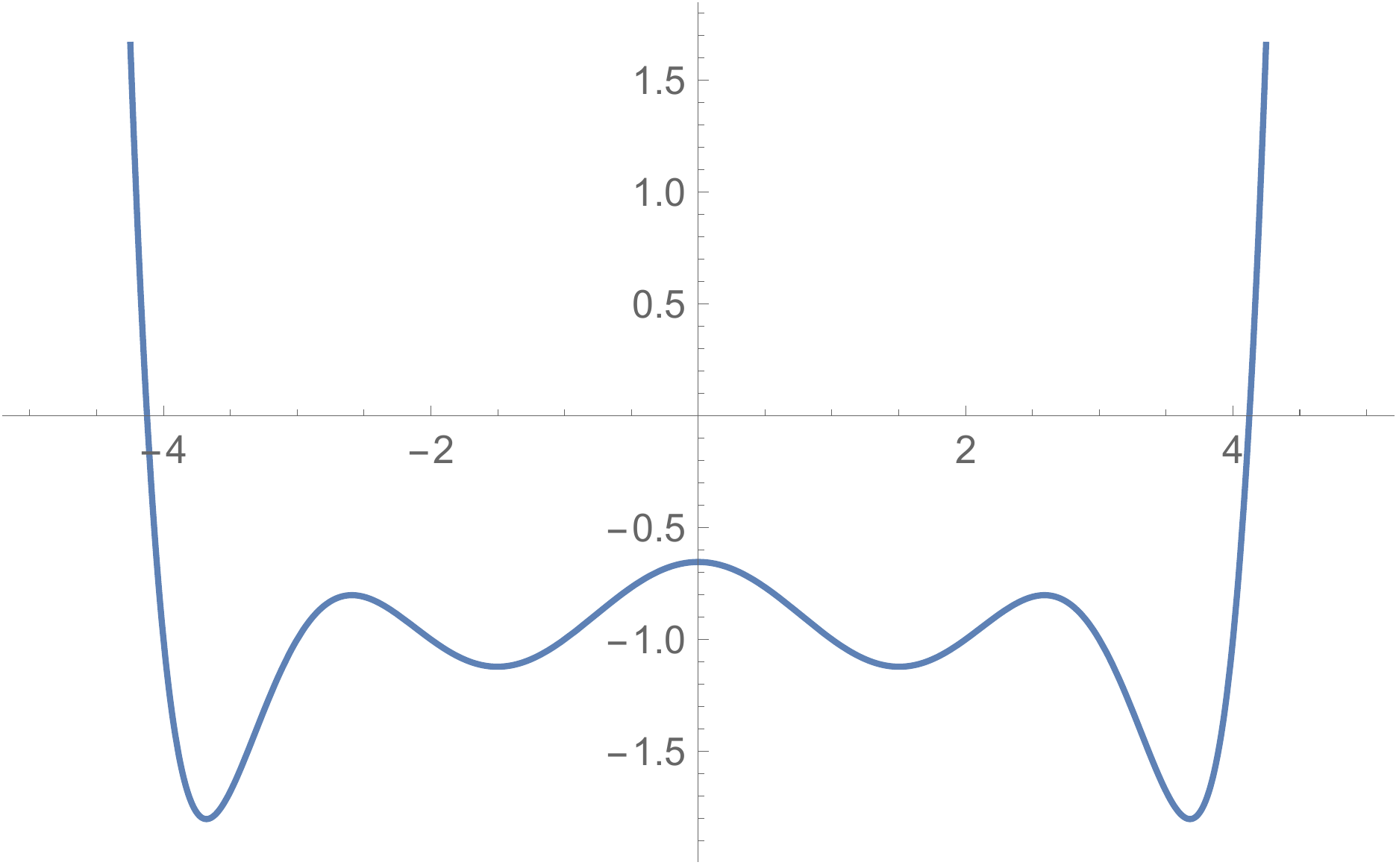}
  \caption{Plot of the polynomial $p(x)$ for $g=4$ ($c=1$ and
    $\alpha=1/1664$).}
  \label{fig:px.g4}
\end{figure}

\noindent
In the next result we give two simple, but useful, lower bounds of the
maximum of $G(t)$ in the interval $0\leq t\leq g^2+1$.

\begin{lemma}\label{lemma:M.lower.bound}
  If
  \begin{align*}
    G(t) = \prod_{k=1}^g(t-k^2)\mathand
    M=\!\!\!\!\max_{0\leq t\leq g^2+1}\!\!\!G(t)
  \end{align*}
  then $M\geq(2g-1)!/g$ and $M\geq (g!)^2$.
\end{lemma}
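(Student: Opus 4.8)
The plan is to exploit that $M$ is a maximum over the interval, so $M\geq G(t_0)$ for any single conveniently chosen $t_0\in[0,g^2+1]$; both bounds will come from the right endpoint $t_0=g^2+1$. At this point
\[
G(g^2+1)=\prod_{k=1}^{g}\paraa{g^2+1-k^2},
\]
and the key preliminary observation is that the $k=g$ factor equals $1$, while each of the remaining factors ($k=1,\ldots,g-1$) is strictly positive. Since $g^2+1$ lies in $[0,g^2+1]$, we automatically have $M\geq G(g^2+1)$, and it remains to bound $G(g^2+1)$ from below in two different ways.

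For the bound $M\geq(2g-1)!/g$, I would discard the trivial $k=g$ factor and estimate each remaining factor from below by dropping the $+1$:
\[
g^2+1-k^2>g^2-k^2=(g-k)(g+k),\qquad k=1,\ldots,g-1.
\]
Taking the product and reindexing, $\prod_{k=1}^{g-1}(g-k)=(g-1)!$ and $\prod_{k=1}^{g-1}(g+k)=(2g-1)!/g!$, so the two telescoping products combine to $(g-1)!\,(2g-1)!/g!=(2g-1)!/g$. Hence $M\geq G(g^2+1)\geq(2g-1)!/g$.

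For the bound $M\geq(g!)^2$, rather than a single crude estimate I would compare $G(g^2+1)$ to $(g!)^2$ factor by factor, matching the $k$-th factor of $G(g^2+1)$ with the factor $(g+1-k)^2$ coming from $(g!)^2=\prod_{k=1}^{g}(g+1-k)^2$. The decisive inequality is
\[
g^2+1-k^2-(g+1-k)^2=-2(k-1)(k-g)\geq 0\qquad\text{for }1\leq k\leq g,
\]
since $(k-1)\geq 0$ and $(k-g)\leq 0$ on this range. Multiplying over $k$ and reindexing $j=g+1-k$ then yields $G(g^2+1)\geq\prod_{j=1}^{g}j^2=(g!)^2$.

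The routine parts are the two telescoping-product identifications; the only genuinely delicate step is spotting the correct pairing for the second bound, namely that the quadratic $g^2+1-k^2-(g+1-k)^2$ factors as $-2(k-1)(k-g)$ and is therefore nonnegative precisely on the index range $1\leq k\leq g$. Everything else is immediate from $M\geq G(g^2+1)$.
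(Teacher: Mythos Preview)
Your argument is correct. The proof of the first bound $M\geq(2g-1)!/g$ is essentially identical to the paper's: evaluate at $t_0=g^2+1$, drop the $+1$, and telescope.

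For the second bound $M\geq(g!)^2$, however, your route is genuinely different and cleaner. The paper does not compare $G(g^2+1)$ directly with $(g!)^2$; instead it first verifies the inequality by hand for $g\leq 4$, and for $g\geq 5$ it leverages the already established bound $M\geq(2g-1)!/g$ and argues that $(2g-1)!/g\geq(g!)^2$ by writing both sides as $(g-1)!$ times a product of $g-1$ factors and comparing factor by factor (this last step needs $g\geq 5$ so that every factor on the negative side is at most $g+1$). Your factor-by-factor pairing of $g^2+1-k^2$ with $(g+1-k)^2$, together with the identity
\[
g^2+1-k^2-(g+1-k)^2=-2(k-1)(k-g),
\]
avoids the case split entirely and gives a uniform one-line argument valid for all $g\geq 1$. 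What the paper's approach buys is a second, independent relation between the two bounds (namely $(2g-1)!/g\geq(g!)^2$ for $g\geq 5$), but for the lemma as stated your approach is more direct.
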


\begin{proof}
  First we note that
  \begin{align*}
    M&\geq G(g^2+1) = \prod_{k=1}^g(g^2+1-k^2)
    \geq\prod_{k=1}^{g-1}(g^2-k^2)
    =\prod_{k=1}^{g-1}(g+k)(g-k)\\
    &=\frac{(2g-1)!}{g},
  \end{align*}
  proving the first inequality. Next, for $g\leq 4$ one verifies that
  \begin{align*}
    &g=1:\qquad M\geq G(g^2+1) = 1 = (1!)^2\\
    &g=2:\qquad M\geq G(g^2+1) = 4 = (2!)^2\\
    &g=3:\qquad M\geq G(g^2+1) = 54>36=(3!)^2\\
    &g=4:\qquad M\geq G(g^2+1) = 1664>576 = (4!)^2.
  \end{align*}
  Now, assume $g\geq 5$ and write
  \begin{align*}
    M-(g!)^2&\geq \frac{(2g-1)!}{g}-(g!)^2\\
            &=(g-1)!\parab{(g+1)(g+2)\cdots(2g-1)-g\cdot g(g-1)\cdots 2\cdot 1}\\
            &=(g-1)!\parab{(g+1)(g+2)\cdots(2g-1)-g\cdot g(g-1)\cdots 5\cdot 4\cdot 6}.
  \end{align*}
  As written, each product inside the parenthesis has $g-1$ factors,
  and every factor in the positive term is $\geq g+1$ and every factor
  in the negative term is $\leq g+1$ (since $g\geq 5$). Thus, we
  conclude that $M-(g!)^2\geq 0$.
\end{proof}

\noindent
With the help of the above lemma, one can prove the following result.

\begin{lemma}\label{lemma:p.zero.c}
  For $\alpha\in(0,2\sqrt{c}/M)$ it holds that
  \begin{align*}
    &p(0)+3\sqrt{c}>0\mathand
    p(0)-\sqrt{c}<0.
  \end{align*}
\end{lemma}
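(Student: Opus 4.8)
The plan is to reduce both inequalities to a single two-sided estimate on $\alpha(g!)^2$ that comes directly from the previous lemma. First I would evaluate $p$ at the origin: since $p(x)=\alpha\prod_{k=1}^g(x^2-k^2)-\sqrt{c}$, putting $x=0$ gives
\begin{align*}
  p(0) = \alpha\prod_{k=1}^g(-k^2)-\sqrt{c} = (-1)^g\alpha(g!)^2-\sqrt{c}.
\end{align*}
Thus everything hinges on controlling the term $(-1)^g\alpha(g!)^2$, whose sign varies with the parity of $g$.

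The key step is to apply Lemma~\ref{lemma:M.lower.bound}, which yields $M\geq(g!)^2$. Combined with the hypothesis $\alpha\in(0,2\sqrt{c}/M)$ this gives
\begin{align*}
  0<\alpha(g!)^2\leq\alpha M<2\sqrt{c},
\end{align*}
equivalently the symmetric bound $-2\sqrt{c}<(-1)^g\alpha(g!)^2<2\sqrt{c}$. From here I would simply rewrite the two claims as shifts of $p(0)$:
\begin{align*}
  p(0)+3\sqrt{c} = (-1)^g\alpha(g!)^2+2\sqrt{c}
  \qand
  p(0)-\sqrt{c} = (-1)^g\alpha(g!)^2-2\sqrt{c},
\end{align*}
so that the first expression is $>-2\sqrt{c}+2\sqrt{c}=0$ and the second is $<2\sqrt{c}-2\sqrt{c}=0$, as desired.

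I do not anticipate any serious obstacle, since the entire content is carried by the inequality $M\geq(g!)^2$. The only point requiring attention is the parity of $g$: the asymmetric shifts $+3\sqrt{c}$ and $-\sqrt{c}$ in the statement are arranged precisely so that, for either parity, the two inequalities pick out opposite ends of the symmetric interval $(-2\sqrt{c},2\sqrt{c})$ in which $(-1)^g\alpha(g!)^2$ is trapped.
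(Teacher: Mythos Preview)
Your proof is correct and follows essentially the same approach as the paper: both compute $p(0)=(-1)^g\alpha(g!)^2-\sqrt{c}$, invoke Lemma~\ref{lemma:M.lower.bound} to get $\alpha(g!)^2<2\sqrt{c}$ from the hypothesis $\alpha<2\sqrt{c}/M$, and then read off the two inequalities. The only cosmetic difference is that the paper splits on the parity of $g$ (noting that one inequality is trivial for each parity and reducing the other to $\alpha(g!)^2<2\sqrt{c}$), whereas you package both parities into the single symmetric bound $-2\sqrt{c}<(-1)^g\alpha(g!)^2<2\sqrt{c}$; the content is identical.
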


\begin{proof}
  The two inequalities can be written as
  \begin{align*}
    &(-1)^g\alpha(g!)^2+2\sqrt{c} > 0\\
    &(-1)^g\alpha(g!)^2-2\sqrt{c} < 0,
  \end{align*}
  and they are trivially satisfied if $g$ is even and odd,
  respectively. In the opposite case, these inequalities are both
  equivalent to
  \begin{align*}
    \alpha(g!)^2<2\sqrt{c}\equivalent
    \frac{\alpha}{\sqrt{c}}<\frac{2}{(g!)^2}.
  \end{align*}
  Recalling that $\alpha<2\sqrt{c}/M$, Lemma~\ref{lemma:M.lower.bound}
  implies that
  \begin{align*}
    \frac{\alpha}{\sqrt{c}}<\frac{2}{M}\leq \frac{2}{(g!)^2}
  \end{align*}
  yielding the desired result.
\end{proof}

\section{Noncommutative surfaces of arbitrary genus}

\noindent
In this section we will recall a class of noncommutative algebras
corresponding to deformations of algebras of smooth functions on
the level sets $\Sigma_g=C^{-1}(0)$. For arbitrary
$C\in C^\infty(\reals^3)$ the relation
\begin{align*}
  \{x^i,x^j\} = \eps^{ijk}(\d_kC)
\end{align*}
(with $x^1=x$, $x^2=y$, $x^3=z$) defines a Poisson structure on
$C^{\infty}(\reals^3)$ that restricts to the level set $C(x,y,z)=0$. For the
particular case when
\begin{align*}
  C(x,y,z) = \frac{1}{2}\paraa{p(x)+y^2}^2+\frac{1}{2}z^2-\frac{1}{2}c
\end{align*}
one obtains
\begin{align*}
  &\{x,y\} = z\\
  &\{y,z\} = p(x)p'(x)+y^2p'(x)\\
  &\{z,x\} = 2y^3+2yp(x). 
\end{align*}
In order to find noncommutative deformations of the above Poisson
algebra, one replaces $\{\cdot,\cdot\}$ by $[\cdot,\cdot]/(i\hbar)$
and considers the relations (cf. \cite{abhhs:noncommutative})
\begin{align}
  &[X,Y] = i\hbar Z\label{eq:XYcom}\\
  &[Y,Z] = i\hbar\paraa{p(X)p'(X)+\mathcal{T}(X,Y^2)}\label{eq:YZcom}\\
  &[Z,X] = i\hbar\paraa{2Y^3+Yp(X)+p(X)Y}\label{eq:ZXcom}
\end{align}
where $\T(X,Y^2)$ denotes a noncommutative polynomial such that its
commutative image $\T(x,y^2)$ equals $p'(x)y^2$; for instance
\begin{align*}
 \T\paraa{X,Y^2}=\tfrac{1}{2}Y^2p'(X)+\tfrac{1}{2}p'(X)Y^2 .
\end{align*}
In other words, $\T$ represents a choice of noncommutative ordering of
the product of $y^2$ and $p'(x)$. Let us keep this choice arbitrary
for the moment and define the noncommutative algebra that will
be of interest for us.

\begin{definition}\label{def:Cgh.def}
  For $g\geq 1$, $c>0$, $\alpha\in(0,2\sqrt{c}/M)$ and $\hbar>0$, let
  $I_\hbar^g(\alpha,c)$ denote the two-sided ideal, in the (unital) free
  $\ast$-algebra $\complex\angles{X,Y,Z}$ (with $X,Y,Z$
  hermitian), generated by \eqref{eq:XYcom}--\eqref{eq:ZXcom}. We set
  \begin{align*}
    \Cgh(\alpha,c)=\complex\angles{X,Y,Z}\slash I_\hbar^g(\alpha,c).
  \end{align*}
\end{definition}

\begin{remark}
  Note that we shall often simply write $\Cgh$ and tacitly assume an
  arbitrary choice of $\alpha$ and $c$ such that
  $\alpha\in(0,2\sqrt{c}/M)$.
\end{remark}

\noindent
For fixed genus $g$, the algebra $\Cgh(\alpha,c)$ is defined by the three parameters
$\hbar,\alpha,c$. However, algebras defined by distinct parameters
might be isomorphic, as shown in the next result.

\begin{proposition}
  If $\C^g_{\hbar_1}(\alpha_1,c_1)$ and
  $\C^g_{\hbar_2}(\alpha_2,c_2)$ are algebras such that
  \begin{equation*}
    \frac{\alpha_1}{\sqrt{c_1}}=\frac{\alpha_2}{\sqrt{c_2}}\mathand
    \hbar_2 = \hbar_1\sqrt{\frac{\alpha_1}{\alpha_2}},
  \end{equation*}
  then
  $\C^g_{\hbar_1}(\alpha_1,c_1)\simeq\C^g_{\hbar_2}(\alpha_2,c_2)$.
\end{proposition}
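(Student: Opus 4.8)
The plan is to realise the isomorphism as an explicit rescaling of the hermitian generators that leaves $X$ fixed. Write $p_i(x)=\alpha_i G(x^2)-\sqrt{c_i}$ for the polynomial attached to $\C^g_{\hbar_i}(\alpha_i,c_i)$. The hypothesis $\alpha_1/\sqrt{c_1}=\alpha_2/\sqrt{c_2}$ means exactly that $\alpha_1=s\alpha_2$ and $\sqrt{c_1}=s\sqrt{c_2}$ for the single positive constant $s:=\alpha_1/\alpha_2=\sqrt{c_1/c_2}$ (this is also the admissibility ratio, since $\alpha<2\sqrt{c}/M$ reads $\alpha/\sqrt{c}<2/M$, so both algebras are simultaneously well defined), and it gives at once the proportionality
\begin{align*}
  p_1(x)=s\,p_2(x)\mathand p_1'(x)=s\,p_2'(x).
\end{align*}
The variable $X$ must be left unscaled: since $G(x^2)=\prod_{k}(x^2-k^2)$ has its roots pinned at the integers, a substitution $X\mapsto\kappa X$ with $\kappa\neq 1$ would move these roots and destroy the proportionality $p_1\propto p_2$, so one is led to an isomorphism fixing $X$.

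Accordingly, I would define a $\ast$-automorphism $\phi$ of the free $\ast$-algebra $\complex\angles{X,Y,Z}$ by
\begin{align*}
  \phi(X)=X,\qquad\phi(Y)=\lambda Y,\qquad\phi(Z)=\mu Z,
\end{align*}
with real constants $\lambda,\mu>0$ to be fixed (positivity guarantees $\phi$ preserves the hermitian $\ast$-structure), and choose $\lambda,\mu$ so that $\phi$ carries each generator of $I_{\hbar_1}^g(\alpha_1,c_1)$ coming from \eqref{eq:XYcom}--\eqref{eq:ZXcom} into $I_{\hbar_2}^g(\alpha_2,c_2)$. Applying $\phi$ to $[X,Y]-i\hbar_1 Z$ forces $\lambda\hbar_2=\hbar_1\mu$. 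Applying it to $[Y,Z]-i\hbar_1\paraa{p_1(X)p_1'(X)+\T(X,Y^2)}$, and using $p_1=sp_2$, $p_1'=sp_2'$ together with the fact that the chosen ordering $\T$ is linear in $p'$ and homogeneous of degree two in $Y$, I obtain $\lambda\mu[Y,Z]-i\hbar_1\paraa{s^2 p_2(X)p_2'(X)+\lambda^2 s\,\T(X,Y^2)}$; subtracting $\lambda\mu$ times the matching generator of $I_{\hbar_2}^g$ leaves terms proportional to $p_2 p_2'$ and to $\T$ whose coefficients must vanish, forcing $\lambda^2=s$ and $\lambda\mu\hbar_2=\hbar_1 s^2$. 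The third relation, treated identically, produces the cubic term $2\lambda^3 Y^3$ and the mixed term $\lambda s\paraa{Yp_2(X)+p_2(X)Y}$, and consistency again returns $\lambda^2=s$ together with $\mu\hbar_2=\hbar_1\lambda s$.

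Solving the resulting system is then purely mechanical. The condition $\lambda^2=s$ fixes $\lambda=(c_1/c_2)^{1/4}=\sqrt{\alpha_1/\alpha_2}$; combining $\lambda\hbar_2=\hbar_1\mu$ with $\mu\hbar_2=\hbar_1\lambda s$ yields $\hbar_2^2=\hbar_1^2 s=\hbar_1^2\lambda^2$, hence $\hbar_2=\hbar_1\lambda=\hbar_1\sqrt{\alpha_1/\alpha_2}$ — which is exactly the stated hypothesis on the deformation parameters — and then $\mu=\lambda^2=\alpha_1/\alpha_2$. With these values $\phi$ maps the three generators of $I_{\hbar_1}^g(\alpha_1,c_1)$ into $I_{\hbar_2}^g(\alpha_2,c_2)$, so it descends to an algebra map $\C^g_{\hbar_1}(\alpha_1,c_1)\to\C^g_{\hbar_2}(\alpha_2,c_2)$. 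Running the same argument for the inverse automorphism $X\mapsto X$, $Y\mapsto\lambda^{-1}Y$, $Z\mapsto\mu^{-1}Z$ (which is $\phi$ with the roles of the two parameter sets interchanged) shows the two ideals are exchanged, so $\phi$ is a $\ast$-isomorphism.

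The computation is elementary, and the single genuinely structural point — where the hypotheses do real work — is that one and the same pair $(\lambda,\mu)$ must simultaneously rescale the quadratic term $pp'$, the cubic term $Y^3$, and the mixed term $\T$ of the two nonlinear relations. Because these carry different total degrees, a single multiplicative rescaling can reconcile them across all three relations only when the polynomials $p_1,p_2$ are proportional (forcing $\alpha_1/\sqrt{c_1}=\alpha_2/\sqrt{c_2}$) and the parameters are tied by $\hbar_2=\hbar_1\sqrt{\alpha_1/\alpha_2}$. Verifying this compatibility uniformly in the three relations is the main thing to check.
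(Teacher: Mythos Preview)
Your proof is correct and follows essentially the same approach as the paper: both construct the isomorphism as the rescaling $X\mapsto X$, $Y\mapsto\lambda Y$, $Z\mapsto\lambda^2 Z$ with $\lambda=\sqrt{\alpha_1/\alpha_2}$, using $p_1=\lambda^2 p_2$ and $p_1'=\lambda^2 p_2'$. The only cosmetic difference is that the paper simply posits $(\lambda,\mu)=(\lambda,\lambda^2)$ and verifies the three relations directly, whereas you leave $\lambda,\mu$ free and derive them from the compatibility conditions; you also make explicit the homogeneity assumption on $\T$ (linear in $p'$, degree two in $Y$) that the paper uses implicitly, and you note the inverse map, which the paper omits.
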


\begin{proof}
  Let $\lambda=\sqrt{\alpha_1}/\sqrt{\alpha_2}$ giving
  $\hbar_2=\lambda\hbar_1$. We shall prove that
  \begin{align*}
    \varphi(X_1)=X_2\qquad\varphi(Y_1) = \lambda Y_2
    \qquad\varphi(Z_1)=\lambda^2Z_2
  \end{align*}
  defines an isomorphism
  $\varphi:\mathcal{C}^g_{\hbar_1}(\alpha_1,c_1)\to\mathcal{C}^g_{\hbar_2}(\alpha_2,c_2)$. It
  is clear that $\varphi$ is invertible, but to show that it is an
  algebra homomorphism, one needs to prove that it respects the
  relations defining $I_\hbar^g$. First of all, we find that
  \begin{align*}
    &[\varphi(X_1),\varphi(Y_1)]-i\hbar_1\varphi(Z_1)
    = \lambda[X_2,Y_2]-i\hbar_1\lambda^2Z_2 
      =i\lambda\hbar_2Z_2-i\lambda\hbar_2 Z_2=0.
  \end{align*}
  Recalling that
  \begin{align*}
    p_i(x) = \alpha_i\prod_{k=1}^g(x^2-k^2)-\sqrt{c_i}\qquad (i=1,2)
  \end{align*}
  together with $\alpha_1=\lambda^2\alpha_2$ and
  $\sqrt{c_1}=\lambda^2\sqrt{c_2}$, we find that
  \begin{align*}
    p_1(x) = \lambda^2p_2(x)\mathand
    p_1'(x) = \lambda^2p'_2(x).
  \end{align*}
  Now, one can show that
  \begin{align*}
  &[\varphi(Y_1),\varphi(Z_1)]-
      i\hbar_1 p_1\paraa{\varphi(X_1)}p_1'\paraa{\varphi(X_1)}
      -i\hbar_1\T_1\paraa{\varphi(X_1),\varphi(Y_1)^2}\\
    &\qquad
      = \lambda^3[Y_2,Z_2]-i\hbar_1\lambda^{4}p_2(X_2)p_2'(X_2)
      -i\hbar_1\lambda^4\T_2(X_2,Y_2^2)\\
    &\qquad
      = \lambda^3[Y_2,Z_2]-i\hbar_2\lambda^{3}p_2(X_2)p_2'(X_2)
      -i\hbar_2\lambda^3\T_2(X_2,Y_2^2)=0,
  \end{align*}
  as well as
  \begin{align*}
    [\varphi(Z_1),&\varphi(X_1)]
    -2i\hbar_1\varphi(Y_1)^3-i\hbar_1\varphi(Y_1)p_1(\varphi(X_1))
    -i\hbar_1 p_1(\varphi(X_1))\varphi(Y_1)\\
                  &=\lambda^2[Z_2,X_2]-2i\hbar_1\lambda^3Y_1p_2(X_2)
                    -i\hbar_1\lambda^3p_2(X_2)Y_2\\
                  &=\lambda^2[Z_2,X_2]-2i\hbar_2\lambda^2Y_2p_2(X_2)
                    -i\hbar_2\lambda^2p_2(X_2)Y_2 = 0.                        
  \end{align*}
  This proves that $\varphi$ is indeed an algebra homomorphism.
\end{proof}

\noindent
Thus, up to a rescaling of the deformation parameter $\hbar$, the
quotient $\alpha/\sqrt{c}$ is the essential parameter of the algebra
$\Cgh$. This may come as no surprise, since it is the same
quotient that determines the location of the roots of $p(x)$.

In \cite{abhhs:fuzzy,abhhs:noncommutative} the analogous algebra
for the choice
\begin{align*}
  C(x,y,z) = \half(x^2+y^2-\mu)^2+\half z^2-\half c
\end{align*}
was investigated in detail. For $-\sqrt{c}<\mu<\sqrt{c}$ the inverse
image has genus 0, and for $\mu>\sqrt{c}$ the inverse image has genus
1, allowing one to study topology transition by varying the parameter
$\mu$. It turns out that one can classify all finite-dimensional (hermitian)
representations in terms of directed graphs. Curiously, the structure
of the graph clearly reflects the topology of the surface with
``strings'' representing genus 0 and ``cycles'' representing genus
1. It is an interesting question whether or not a similar statement is
true in the higher genus case.

\section{Representations of $\Cgh$}

\noindent
We aim to construct representations $\phi:\Cgh\to\Mat(n,\complex)$
such that $\phi(X)$, $\phi(Y)$ and $\phi(Z)$ are hermitian
matrices. When it is clear from the context, we shall for convenience
simply write $X,Y,Z$ instead of $\phi(X),\phi(Y),\phi(Z)$. Note that
since the matrix algebra generated by a representation is invariant
under hermitian conjugation, every reducible representation will be
completely reducible.

By applying a unitary transformation, one may assume that $X$ is
diagonal with
\begin{align*}
  X = \diag(x_1,x_2,\ldots,x_n)\qquad x_i\in\reals,
\end{align*}
and we note that $Z$ can be eliminated from
\eqref{eq:XYcom}--\eqref{eq:ZXcom} (as $Z=\tfrac{1}{i\hbar}[X,Y]$)
giving
\begin{align}
  &\bracketa{[X,Y],Y} = \hbar^2\paraa{p(X)p'(X)+\T(X,Y^2)}\label{eq:XYYcom}\\
  &\bracketa{[Y,X],X} = \hbar^2\paraa{2Y^3+Yp(X)+p(X)Y}\label{eq:YXXcom}.
\end{align}
Up to now, the choice of $\T(X,Y^2)$ has been quite arbitrary, and it
is time to introduce a few assumptions as well as develop some
notation in the case when $X$ is diagonal. Namely, for an arbitrary
choice of $\T$, every term will be of the form $X^kY^2X^l$ for some
$k,l\geq 0$. If $X$ is assumed to be diagonal, this implies that there
exists a polynomial $\ph(x,y)$ such that
\begin{align*}
  \T(X,Y^2)_{ij} =\ph(x_i,x_j)\sum_{k=1}^ny_{ik}y_{kj},
\end{align*}
giving $\ph(x,x)=p'(x)$.  For instance, if $\T=(p'(X)Y^2+Y^2p'(X))/2$
then
\begin{align}\label{eq:ph.example}
  \ph(x,y) = \half\paraa{p'(x)+p'(y)}.
\end{align}
In the following, we shall assume that $\T$ is chosen such that
\begin{align*}
  \ph(x,y)=\ph(y,x)\mathand \ph(x,-x)=0,
\end{align*}
which is clearly true for \eqref{eq:ph.example} since
$p'(-x)=-p'(x)$. 

The $(i,j)$ matrix elements of \eqref{eq:XYYcom} and \eqref{eq:YXXcom}
can then be written as
\begin{align}
  &\sum_{k=1}^n\paraa{\qh(x_i,x_j)-2x_k}y_{ik}y_{kj}
    = \hbar^2\delta_{ij}p(x_i)p'(x_j)\tag{\Aeq{ij}}\label{eq:Aij}\\
  &2h^2\sum_{k,l=1}^ny_{ik}y_{kl}y_{lj} = \rh(x_i,x_j)y_{ij}\tag{\Beq{ij}}\label{eq:Bij}
\end{align}
where
\begin{align*}
  &\qh(x,y) = x+y-\hbar^2\ph(x,y)\\
  &\rh(x,y) = (x-y)^2-\hbar^2\paraa{p(x)+p(y)}.
\end{align*}
We note that $\qh(x,y)=\qh(y,x)$ and $\rh(x,y)=\rh(y,x)$ as well as
\begin{alignat*}{2}
  &\qh(x,x) = 2x-\hbar^2p'(x) &\qquad &\qh(x,-x)=0\\
  &\rh(x,x) = -2\hbar^2p(x) & &\rh(x,-x) = 4x^2-2\hbar^2p(x).
\end{alignat*}
Since $Y$ is hermitian equation \Aeq{ij} is equivalent to \Aeq{ji}
(due to $\qh$ being symmetric) and equation \Beq{ij} is equivalent to
\Beq{ji}; therefore, it is sufficient to consider \Aeq{ij} and
\Beq{ij} for $i\leq j$. In particular, for $i=j$ one gets
\begin{align*}
  \sum_{k=1}^n\paraa{2(x_i-x_k)-\hbar^2p'(x_i)}|y_{ik}|^2 = \hbar^2p(x_i)p'(x_i).\tag{\Aeq{ii}}
\end{align*}
For a matrix regularization, the matrices $X$, $Y$ and $Z$ correspond
to the the functions $x$, $y$ and $z$, respectively, and 
representations with $\phi(Z)=0$ might be considered degenerate
from this point of view. Therefore, we make the following definition.

\begin{definition}
  A representation $\phi$ of $\Cgh$ is called \emph{degenerate} if
  $\phi(Z)=0$. If a representation is not degenerate, it is called
  \emph{non-degenerate}.
\end{definition}

\noindent
It immediately follows that the structure of degenerate
representations is simple.

\begin{proposition}
  A degenerate representation of $\Cgh$ is completely reducible to a
  sum of 1-dimensional representations.
\end{proposition}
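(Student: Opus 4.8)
The plan is to exploit the fact that the vanishing of $\phi(Z)$ forces $\phi(X)$ and $\phi(Y)$ to commute, so that all three generators can be simultaneously diagonalized. Concretely, I would first invoke the relation \eqref{eq:XYcom}, which gives $[\phi(X),\phi(Y)] = i\hbar\,\phi(Z)$ in any representation; since $\phi$ is degenerate, $\phi(Z)=0$, and hence $[\phi(X),\phi(Y)]=0$.

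Next, $\phi(X)$ and $\phi(Y)$ are commuting hermitian matrices, so by the spectral theorem there is a unitary $U$ diagonalizing both simultaneously. Passing to the unitarily equivalent representation $U\phi(\cdot)U^{*}$ --- which is again a representation of $\Cgh$ and isomorphic to $\phi$ --- I may assume that $\phi(X)$ and $\phi(Y)$ are both diagonal, while $\phi(Z)=0$ is trivially diagonal as well.

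With all three generators now represented by diagonal matrices, each standard basis vector $e_i$ spans a one-dimensional $\phi$-invariant subspace. The restriction of $\phi$ to $\operatorname{span}(e_i)$ is then a one-dimensional representation $\phi_i$, and $\phi\simeq\bigoplus_{i=1}^n\phi_i$, which is exactly the claimed decomposition.

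The argument is essentially complete at this point; the only thing worth keeping in mind is that no separate verification of the remaining relations \eqref{eq:YZcom}--\eqref{eq:ZXcom} on the diagonal blocks is required, since unitary conjugation preserves the defining relations and the restriction of a representation to an invariant subspace is automatically a subrepresentation. There is thus no real obstacle here: the content of the statement is precisely that $\phi(Z)=0$ collapses the noncommutativity, after which simultaneous diagonalizability of commuting hermitian matrices does the rest.
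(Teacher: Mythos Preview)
Your proof is correct and follows essentially the same approach as the paper: use $\phi(Z)=0$ together with \eqref{eq:XYcom} to conclude $[\phi(X),\phi(Y)]=0$, simultaneously diagonalize the commuting hermitian matrices $\phi(X)$ and $\phi(Y)$, and read off the decomposition into one-dimensional summands. Your write-up is simply a slightly more detailed version of the paper's argument.
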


\begin{proof}
  Let $X,Y,Z$ be hermitian matrices of a representation of
  $\Cgh$. Assuming that $Z=0$ it follows that $0 = i\hbar Z = [X,Y]$
  and since $X$ and $Y$ are hermitian, there exists a basis where they
  are both diagonal. Thus, every matrix in the representation is
  diagonal and hence equivalent to a direct sum of 1-dimensional
  representations.
\end{proof}

\subsection{The directed graph of $Y$}

\noindent
Constructing representations of $\Cgh$, in a basis where $X$ is
diagonal, is equivalent to solving equations \eqref{eq:Aij} and
\eqref{eq:Bij} for the matrix elements of $X$ and $Y$. In doing so, it
is convenient to encode the structure of the non-zero matrix elements
of $Y$ in a graph. Therefore, let us recall the concept of a graph
associated to a matrix.
\begin{definition}
  Let $Y$ be a hermitian $(n\times n)$-matrix with matrix elements
  $y_{ij}$ for $i,j\in\{1,\ldots,n\}$. The graph of $Y$ is the (undirected)
  graph $G=(V,E)$ on $n$ vertices such that $(ij)\in E$ if and only if
  $y_{ij}\neq 0$.
\end{definition}

\noindent
Note that the above definition implies that a graph may have
self-loops, i.e. $(ii)\in E$. To distinguish between such a graph,
and a graph without self-loops, we shall refer to the latter as a
\emph{simple graph}. Furthermore, by a \emph{walk} we mean a sequence
of vertices $(i_1i_2\cdots i_N)$ such that $(i_ki_{k+1})\in E$ for
$1\leq k\leq N-1$. A \emph{path} is a walk $(i_1i_2\cdots i_N)$ such
that $i_k\neq i_l$ if $k\neq l$. The \emph{length} of a walk
$(i_1i_2\cdots i_N)$ is $N-1$. For a connected graph G, we let
$d(i,j)$ denote the length of the shortest walk starting at $i\in V$
and ending at $j\in V$.

\begin{definition}
  A graph $G$ is called \emph{admissible} if there exists a
  representation $\phi$ of $\Cgh$ such that $\phi(X)$ is diagonal and
  $G$ is the graph of $\phi(Y)$. In this case, we say that $G$ is the
  graph of $\phi$. If a graph is not admissible, it is called
  \emph{forbidden}.
\end{definition}

\noindent It is easy to see that for a representation to be
irreducible, a necessary condition is that the corresponding graph is
connected.

\begin{proposition}\label{prop:disconnected.graph.irreducible}
  Let $\phi$ be a representation of $\Cgh$ and let $G$ be the graph of
  $\phi$. If $G$ is not connected then $\phi$ is reducible.
\end{proposition}

\begin{proof}
  Let $\phi$ be a representation and choose a basis such that
  $\phi(X)$ is diagonal and $1,2,\ldots,N$ are the vertices of one of
  the components of $G$. Since none of these vertices are connected to
  the remaining vertices of the graph, the matrix $\phi(Y)$ will be
  block diagonal, implying that $\phi(X)$ (which is already diagonal)
  and $Z=[X,Y]/(i\hbar)$ have the same block structure. Hence, $\phi$
  is equivalent to the direct sum of two representations.
\end{proof}

\noindent
The above result implies that one only needs to consider connected
graphs when constructing irreducible representations.

\begin{lemma}\label{lemma:graph.path.length.three}
  Let $G=(V,E)$ and let $i,j\in V$ such that there
  exists a unique walk of length 3 from $i$ to $j$. If
  $(ij)\notin E$ then $G$ is forbidden.
\end{lemma}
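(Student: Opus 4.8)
The plan is to derive a contradiction directly from equation~\eqref{eq:Bij}, which in a basis where $X$ is diagonal reads
\begin{align*}
  2\hbar^2\sum_{k,l=1}^n y_{ik}y_{kl}y_{lj} = \rh(x_i,x_j)\,y_{ij}.
\end{align*}
The left-hand side is precisely $2\hbar^2(Y^3)_{ij}$, and the key observation is that a single summand $y_{ik}y_{kl}y_{lj}$ is nonzero exactly when all three factors $y_{ik},y_{kl},y_{lj}$ are nonzero, i.e. exactly when $(i\,k\,l\,j)$ is a walk of length $3$ from $i$ to $j$ in $G$. Hence the nonzero terms of the triple sum are in bijection with the walks of length $3$ from $i$ to $j$.

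First I would invoke the hypothesis that there is a \emph{unique} such walk, say $(i\,k_0\,l_0\,j)$. Uniqueness means exactly one summand survives, so no cancellation can occur and
\begin{align*}
  2\hbar^2\sum_{k,l=1}^n y_{ik}y_{kl}y_{lj}
  = 2\hbar^2\,y_{ik_0}y_{k_0l_0}y_{l_0j}\neq 0,
\end{align*}
since $\hbar>0$ and each of the three factors is nonzero by the definition of the edges of $G$. Next I would use the remaining hypothesis $(ij)\notin E$, which by the definition of the graph of $Y$ means $y_{ij}=0$; therefore the right-hand side $\rh(x_i,x_j)\,y_{ij}$ vanishes, regardless of the value of $\rh(x_i,x_j)$. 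This contradicts the previous display, so no representation $\phi$ of $\Cgh$ can have $\phi(X)$ diagonal and $G$ as the graph of $\phi(Y)$, which is exactly the assertion that $G$ is forbidden.

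The argument is short, so the only care needed is in the combinatorial bookkeeping of the triple sum, and this is the point I would flag. A walk of length $3$ in the sense defined here is permitted to traverse self-loops and to revisit vertices (for instance $(i\,i\,l\,j)$ when there is a self-loop at $i$), so the correspondence between nonzero summands and walks must count all such degenerate walks, and the uniqueness hypothesis is understood to include them. I expect this to be the only place where one could slip, but it causes no real difficulty: as long as exactly one summand is nonzero, its value is a product of nonzero complex numbers and cannot vanish, which is precisely what rules out the accidental cancellation that several competing walks might otherwise allow.
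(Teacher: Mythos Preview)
Your proof is correct and follows essentially the same approach as the paper: both use equation~\Beq{ij}, note that the unique walk gives a single nonzero summand on the left while $(ij)\notin E$ forces the right-hand side to vanish, and conclude with the resulting contradiction. Your version is more detailed in justifying the walk--summand correspondence, but the argument is the same.
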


\begin{proof}
  Let $(iklj)$ denote the unique walk of length 3 such
  that $y_{ik},y_{kl},y_{lj}\neq 0$. Equation \eqref{eq:Bij} gives
  \begin{align*}
    2\hbar^2y_{ik}y_{kl}y_{lj} = \rh(x_i,x_j)y_{ij}=0
  \end{align*}
  since $(ij)\notin E$. But this contradicts the assumption that these
  matrix elements are non-zero. Hence, $G$ is forbidden.
\end{proof}

\noindent
The above result has immediate consequences that exclude certain
classes of graphs from being representations.

\begin{corollary}
  Let $G=(V,E)$ be a tree. If there exist $i,j\in E$ such that
  $d(i,j)\geq 3$ then $G$ is forbidden. Hence, any admissible tree
  is ``star shaped''.
\end{corollary}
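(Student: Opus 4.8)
The plan is to reduce the statement to Lemma~\ref{lemma:graph.path.length.three} by extracting, from any pair of vertices at distance at least $3$, a pair that are not adjacent and are joined by a \emph{unique} walk of length $3$. First I would fix a tree $G=(V,E)$ and vertices $i,j\in V$ with $d(i,j)\geq 3$. Since $G$ is a tree, there is a unique path from $i$ to $j$; writing its first four vertices as $i=v_0,v_1,v_2,v_3$ (these are distinct, as they lie on a geodesic), the pair $v_0,v_3$ satisfies $d(v_0,v_3)=3$, and it suffices to apply the lemma to this pair.

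Next I would verify the two hypotheses of Lemma~\ref{lemma:graph.path.length.three} for $v_0$ and $v_3$. Non-adjacency is immediate, since $d(v_0,v_3)=3>1$ forces $(v_0v_3)\notin E$. For uniqueness of the length-$3$ walk I would argue via the tree metric: fixing $v_3$ as target, every edge either increases or decreases the distance to $v_3$ by exactly $1$, and at each vertex there is a unique neighbour that decreases this distance, namely the one lying on the geodesic toward $v_3$. A walk of length $3$ from $v_0$ to $v_3$ must reduce the distance from $3$ to $0$ in three steps, so every step must be distance-decreasing; the walk is therefore forced to be the geodesic $v_0v_1v_2v_3$ and is unique. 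Lemma~\ref{lemma:graph.path.length.three} then yields that $G$ is forbidden.

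For the final assertion I would argue by contraposition: if $G$ is an admissible tree, the first part forbids any two vertices at distance $\geq 3$, so all pairwise distances are at most $2$. A tree in which every distance is at most $2$ is a star; indeed, taking a longest path (of length $\leq 2$), every remaining vertex must be adjacent to its central vertex, for otherwise attaching it through an endpoint would create a path of length $3$, and hence a pair of vertices at distance $3$, a contradiction.

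The main obstacle I anticipate is the uniqueness claim for the length-$3$ walk, which requires ruling out backtracking walks such as $v_0\to a\to v_0\to\cdots$. This is precisely where the acyclicity of the tree is essential, and the distance-monotonicity argument handles it cleanly: any backtracking step fails to make the net progress of $3$ demanded within only three steps, so no such walk can reach $v_3$.
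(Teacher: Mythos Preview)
Your proof is correct and follows essentially the same route as the paper: reduce to a pair of vertices at distance exactly $3$, check they are non-adjacent, and invoke Lemma~\ref{lemma:graph.path.length.three}. Your treatment is in fact more careful than the paper's, which speaks only of paths when verifying the hypothesis of the lemma; your distance-monotonicity argument explicitly rules out backtracking walks and thus cleanly establishes uniqueness of the length-$3$ \emph{walk}, and you also spell out the ``star shaped'' conclusion that the paper merely asserts.
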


\begin{proof}
  Assume that there exists a pair of vertices $i,j$ such that
  $d(i,j)\geq 3$, which implies that there exists a vertex $k$ such
  that $d(i,k)=3$. Since $G$ is a tree there is exactly one path
  realizing this distance, and there is no other path connecting $i$
  and $k$. In particular, $(ik)\notin E$.  Then
  Lemma~\ref{lemma:graph.path.length.three} implies that $G$ is
  forbidden.
  %Hence, if $G$ is admissible then $d(i,j)\leq 2$ for all $i,j\in V$.
\end{proof}

\begin{corollary}\label{cor:forbidden.cycles}
  Let $G=C_n$ be the cycle graph on $n\geq 3$ vertices. If $n\notin\{4,6\}$
  then $G$ is forbidden.
  % vertices and assume that $G$ is
  % admissible. Then $n\in\{3,4,6\}$.
\end{corollary}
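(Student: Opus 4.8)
The plan is to extract the forbidden cases from Lemma~\ref{lemma:graph.path.length.three} by exhibiting, between two non-adjacent vertices, a \emph{unique} walk of length $3$. Label the vertices of $C_n$ by residues modulo $n$, so that $E=\{(i,i+1):i\in\integers_n\}$, and for $n\geq 5$ with $n\neq 6$ I would apply the lemma to the pair $i$ and $i+3$.

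First I would classify the length-$3$ walks issuing from $i$. Such a walk is a sequence of three steps, each $+1$ or $-1$, and the eight sign patterns produce net displacements $+3,+1,-1,-3$, i.e.\ endpoints $i+3,i+1,i-1,i-3$. The endpoint equals $i+3$ exactly when the displacement is $\equiv 3\pmod n$; the offending coincidences $+1,-1,-3\equiv 3\pmod n$ would force $n\mid 2$, $n\mid 4$, $n\mid 6$ respectively, none of which hold for $n\geq 5$ with $n\neq 6$. Hence only the all-forward pattern lands on $i+3$, so $i\to i+1\to i+2\to i+3$ is the unique walk of length $3$ from $i$ to $i+3$; moreover $(i,i+3)\notin E$, since $3\equiv\pm 1\pmod n$ would force $n\mid 2$ or $n\mid 4$. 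Lemma~\ref{lemma:graph.path.length.three} then declares $C_n$ forbidden, settling every $n\geq 5$ with $n\neq 6$. The excluded values are precisely the degenerate ones: for $n=4$ the target $i+3\equiv i-1$ is adjacent to $i$, and for $n=6$ the displacement $-3\equiv 3\pmod 6$ yields a \emph{second} walk $i\to i-1\to i-2\to i-3$, so in both cases the lemma cannot be invoked.

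The remaining case $n=3$ is genuinely different and is where I expect the real work. Here $C_3$ is complete: no two distinct vertices are non-adjacent, and the only length-$3$ walks from a vertex to itself are the two orientations of the triangle, so Lemma~\ref{lemma:graph.path.length.three} does not apply. Instead I would work directly with \eqref{eq:Aij} and \eqref{eq:Bij}. Put $X=\diag(x_1,x_2,x_3)$ and let $Y$ be hermitian with vanishing diagonal and $y_{ij}\neq 0$ for $i\neq j$. The characteristic polynomial of $Y$ is $\lambda^3-s\lambda-\det Y$ with $s=\sum_{i<j}|y_{ij}|^2>0$, so Cayley--Hamilton gives $Y^3=sY+(\det Y)I$ and hence $(Y^3)_{ij}=s\,y_{ij}$ for $i\neq j$. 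Substituting into \eqref{eq:Bij} forces $\rh(x_i,x_j)=2\hbar^2 s$ to be the \emph{same} value for all three pairs, while in the off-diagonal equations \eqref{eq:Aij} the terms $k=i$ and $k=j$ drop out (the diagonal of $Y$ being zero), leaving only $k$ equal to the third index and giving $\qh(x_i,x_j)=2x_k$ for $\{i,j,k\}=\{1,2,3\}$.

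Finally I would combine these relations with the diagonal part of \eqref{eq:Aij} and the positivity $s>0$ to reach a contradiction. Written out via $\rh(x,y)=(x-y)^2-\hbar^2(p(x)+p(y))$, the three equalities $\rh(x_i,x_j)=2\hbar^2 s$ tie the differences of the $x_i$ to the values $p(x_i)$, while the relations $\qh(x_i,x_j)=2x_k$ fix $\hbar^2\ph(x_i,x_j)=x_i+x_j-2x_k$ and, through the defining identities $\ph(x,x)=p'(x)$ and $\ph(x,-x)=0$, clash with symmetric configurations of the $x_i$. As a concrete check, for $g=2$ and the symmetric ordering \eqref{eq:ph.example} the $\qh$-relations make $x_1,x_2,x_3$ the three roots of a single cubic with no quadratic term, so $x_1+x_2+x_3=0$ and $\{x_1,x_2,x_3\}=\{0,\xi,-\xi\}$; the two resulting expressions for $\xi^2$ coming from \eqref{eq:Aij} and \eqref{eq:Bij} then have opposite signs, which is impossible. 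The main obstacle is to run this last contradiction uniformly in $g$ and for an arbitrary admissible ordering $\T$: there only three of the up to $2g-1$ possible $x$-values are constrained, so Vieta no longer closes the argument by itself, and one must invoke the analytic properties of $p$ from Section~2 (together with $s>0$) to exclude a solution.
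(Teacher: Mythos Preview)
For $n\geq 5$ with $n\neq 6$ your argument is exactly the paper's: pick the pair $(i,i+3)$ and invoke Lemma~\ref{lemma:graph.path.length.three}. Your modular bookkeeping on the eight sign patterns is in fact cleaner than the paper's case split into $n\geq 7$ and $n=5$ (and it avoids the paper's slightly loose phrase ``let $i,j$ be vertices with $d(i,j)=3$'' for $C_5$, whose diameter is $2$).

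The divergence is at $n=3$. The paper does not compute anything there: it simply invokes Lemma~\ref{lemma:graph.path.length.three} with $i=j$, asserting that in $C_3$ there is exactly one walk of length $3$ from a vertex to itself, and since $(ii)\notin E$ the graph is forbidden. You correctly point out that there are actually \emph{two} such closed walks, the two orientations of the triangle; their contributions to \Beq{ii} are complex conjugates of one another and hence only yield $\operatorname{Re}(y_{12}y_{23}y_{31})=0$, which is not by itself a contradiction. So your decision to abandon the lemma and argue directly is well motivated. However, the direct argument you propose is incomplete: you derive the constraints $\rh(x_i,x_j)=2\hbar^2 s$ and $\qh(x_i,x_j)=2x_k$, check a contradiction only for $g=2$ with the symmetric ordering \eqref{eq:ph.example}, and explicitly leave the general $g$ and general $\T$ as an ``obstacle''. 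That is a genuine gap. In short, the paper disposes of $n=3$ in one line by an appeal to the lemma that your own observation shows to be imprecise, while your more careful route does not yet reach a conclusion; neither version, as written, gives a complete argument for $C_3$.
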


\begin{proof}
  For $n\geq 7$ let $i$ and $j$ be vertices in the cycle with
  $d(i,j)=3$. Since $(ij)\notin E$,
  Lemma~\ref{lemma:graph.path.length.three} implies that $G$ is
  forbidden. For $n=5$, let $i,j$ be vertices with $d(i,j)=3$. Then it
  is easy to check that there is precisely one other path between $i$
  and $j$, and that path is of length two. Thus,
  Lemma~\ref{lemma:graph.path.length.three} implies that $G$ is
  forbidden. Similarly, for $n=3$, there is exactly one walk of length
  $3$ from a vertex $i$ to itself. Since $(ii)\notin V$,
  Lemma~\ref{lemma:graph.path.length.three} implies that $G$ is
  forbidden.
\end{proof}

\noindent
The above results indicate that a general admissible graph probably
has a dense edge structure without large sparse subgraphs. Let us now
continue to study representations of low dimensions.

\subsection{1-dimensional representations}

\noindent
Consider the case when $X,Y$ are $1\times 1$ matrices, and write
$X=x\in\reals$, $Y=y\in\reals$ and $Z=z\in\reals$. Equations
\eqref{eq:XYYcom} and \eqref{eq:YXXcom} become
\begin{align*}
  &p'(x)\paraa{p(x)+y^2} = 0\\
  &y\paraa{p(x)+y^2} = 0
\end{align*}
Clearly, there are solutions with $p(x)=-y^2$. That is, for every
$x\in\reals$ such that $p(x)<0$ one sets $y=\sqrt{|p(x)|}$. If
$p(x)+y^2\neq 0$ then one must necessarily have $y=0$ and
$p'(x)=0$. Note that $Z=0$ since $X$ and $Y$ commute, implying that
all one dimensional representations are degenerate.

\subsection{2-dimensional representations}

In this section, we shall construct irreducible 2-dimensional
representations of $\Cgh$. As previously noted,
Proposition~\ref{prop:disconnected.graph.irreducible} implies that one
only needs to consider connected graphs, and there are 3
non-isomorphic connected graphs with two vertices as shown in
Figure~\ref{fig:2dgraphs}. It follows immediately from
Lemma~\ref{lemma:graph.path.length.three} that the graph of Type III
is forbidden since there is a unique walk of length 3 from the vertex
with no self-loop to itself. In the following, we shall see that both
graphs of Type I and II are in fact admissible.
\begin{figure}[h]
  \centering
  \includegraphics[width=11cm]{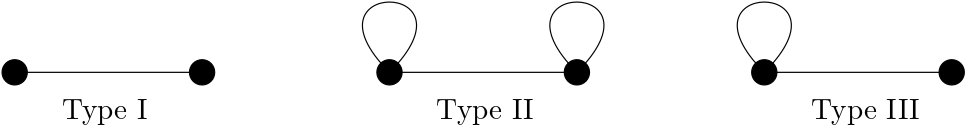}
  \caption{Connected graphs with 2 vertices.}
  \label{fig:2dgraphs}
\end{figure}

\noindent
For 2-dimensional representations, one can slightly strengthen the
correspondence between representations and graphs. Before formulating
this result, let us prove the following lemma which will later also be
useful when discussing equivalence of representations.

\begin{lemma}\label{lemma:unitary.2x2}
  Let $x_1\neq x_2$,
  \begin{align*}
    X =
    \begin{pmatrix}
      x_1 & 0 \\ 0 & x_2
    \end{pmatrix}\qquad
                     \tilde{X} =
                     \begin{pmatrix}
                       \tilde{x}_1 & 0 \\ 0 & \tilde{x}_2
                     \end{pmatrix}
  \end{align*}
  and assume that there exists a unitary matrix $U$ such that
  $UXU^\dagger = \tilde{X}$. For an arbitrary hermitian matrix
  \begin{align*}
    Y=
    \begin{pmatrix}
      y_1 & z \\ \bar{z} & y_2
    \end{pmatrix}
  \end{align*}
  it follows that there exists $\varphi\in\reals$ such that either
  \begin{align*}
    \tilde{X} =
    \begin{pmatrix}
      x_1 & 0 \\ 0 & x_2
    \end{pmatrix}\mathand
                     UYU^\dagger =
                     \begin{pmatrix}
                       y_1 & e^{i\varphi}z\\
                       e^{-i\varphi}z & y_2
                     \end{pmatrix}
  \end{align*}
  or
  \begin{align*}
    \tilde{X} =
    \begin{pmatrix}
      x_2 & 0 \\ 0 & x_1
    \end{pmatrix}\mathand
                     UYU^\dagger =
                     \begin{pmatrix}
                       y_2 & e^{-i\varphi}\bar{z}\\
                       e^{i\varphi}z & y_1
                     \end{pmatrix}.                                       
  \end{align*}
\end{lemma}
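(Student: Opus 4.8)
The plan is to exploit the hypothesis $x_1\neq x_2$ to pin down the possible forms of $U$, and then to obtain the two alternatives by a direct computation of $UYU^\dagger$. First I would note that conjugation by a unitary preserves the spectrum, so $\{\tilde{x}_1,\tilde{x}_2\}=\{x_1,x_2\}$. Since $x_1\neq x_2$, this leaves exactly two orderings, namely $(\tilde{x}_1,\tilde{x}_2)=(x_1,x_2)$ and $(\tilde{x}_1,\tilde{x}_2)=(x_2,x_1)$, which are precisely the two choices of $\tilde{X}$ in the statement. The content of the lemma is therefore to determine, in each case, how $Y$ transforms.

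Next I would determine $U$ itself by rewriting $UXU^\dagger=\tilde{X}$ as $UX=\tilde{X}U$ and comparing entries. Writing $U=(u_{ij})$, this relation reads $u_{ij}(x_j-\tilde{x}_i)=0$ for all $i,j$. In the first case $\tilde{x}_i=x_i$, so $u_{ij}=0$ whenever $i\neq j$ (because $x_i\neq x_j$), which forces $U$ to be diagonal; unitarity then gives $U=\diag(e^{-i\theta_1},e^{-i\theta_2})$ for some $\theta_1,\theta_2\in\reals$. In the second case $\tilde{x}_1=x_2$ and $\tilde{x}_2=x_1$, and the same comparison now forces $u_{11}=u_{22}=0$, so that $U$ is anti-diagonal with entries of modulus one, say $u_{12}=e^{-i\theta_1}$ and $u_{21}=e^{-i\theta_2}$.

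Finally I would compute $UYU^\dagger$ in each case. In the diagonal case the conjugation leaves the diagonal entries $y_1,y_2$ unchanged and multiplies the off-diagonal entry by the phase $e^{i\varphi}$ with $\varphi=\theta_2-\theta_1$, the remaining entry being fixed by hermiticity of $UYU^\dagger$; this is the first displayed form. In the anti-diagonal case the conjugation additionally interchanges the two diagonal entries (producing $y_2$ and $y_1$) and carries $z$ to its value with a conjugation and a phase, with $\varphi=\theta_1-\theta_2$, yielding the second displayed form; again hermiticity of $UYU^\dagger$ dictates the placement of $z$ versus $\bar{z}$ on the two off-diagonal positions.

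I do not expect a genuine obstacle here. The only point requiring care is that the distinctness $x_1\neq x_2$ is exactly what rules out any mixing of the two (one-dimensional) eigenspaces of $X$ and thereby forces $U$ to be either diagonal or anti-diagonal; once this structure is established, identifying $\varphi$ and recording the off-diagonal entries is a routine $2\times2$ computation.
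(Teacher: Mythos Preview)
Your argument is correct and complete. The paper's proof reaches the same conclusion by a slightly different route: it parametrizes a general $2\times 2$ unitary in Euler-angle form, computes $UXU^\dagger$ explicitly, and observes that the off-diagonal entry carries a factor $(x_1-x_2)\sin 2\theta$, forcing $\theta\in\{0,\pi/2,\pi,3\pi/2\}$; these values then give precisely the diagonal and anti-diagonal forms of $U$. Your approach via the intertwining relation $UX=\tilde{X}U$, read entrywise as $u_{ij}(x_j-\tilde{x}_i)=0$, is arguably cleaner: it bypasses any parametrization and makes the dichotomy (diagonal versus anti-diagonal) immediate from the eigenspace structure. Both arguments rely on $x_1\neq x_2$ at exactly the same point and finish with the same routine computation of $UYU^\dagger$.
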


\begin{proof}
  Writing an arbitrary unitary $2\times 2$ matrix as
  \begin{align*}
    e^{\tilde{\varphi}/2}
    \begin{pmatrix}
      e^{i\varphi_1}\cos\theta & e^{i\varphi_2}\sin\theta \\
      -e^{-i\varphi_2}\sin\theta & e^{-i\varphi_1}\cos\theta
    \end{pmatrix}
  \end{align*}
  one finds that
  \begin{align*}
    UXU^\dagger=
    \begin{pmatrix}
      x_1\cos^2\theta + x_2\sin^2\theta &
      -\tfrac{1}{2}e^{i(\varphi_1+\varphi_2)}(x_1-x_2)\sin 2\theta\\
      -\tfrac{1}{2}e^{i(\varphi_1+\varphi_2)}(x_1-x_2)\sin 2\theta
      & x_1\sin^2\theta + x_2\cos^2\theta\\
    \end{pmatrix}.
  \end{align*}
  For this matrix to be diagonal, a necessary condition is that
  $\sin 2\theta=0$ since $x_1\neq x_2$. Thus,
  $\theta=0,\pi/2,\pi,3\pi/2$ and it is easy to see that $UYU^\dagger$
  has the required form for these values of $\theta$.
\end{proof}

\noindent
Using the above result, one can prove that unitarily equivalent
representations have isomorphic graphs.

\begin{proposition}\label{prop:2d.graph.rep.iso}
  If $\phi$ and $\phi'$ are unitarily equivalent non-degenerate
  2-dimensional representations of $\Cgh$, then the graph of $\phi$ is
  isomorphic to the graph of $\phi'$.
\end{proposition}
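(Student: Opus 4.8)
The plan is to show that unitary equivalence of two $2$-dimensional representations forces a correspondence between nonzero entries of the two $Y$-matrices, which is exactly what an isomorphism of graphs requires. Since both representations are non-degenerate, neither $\phi(X)$ nor $\phi'(X)$ can be a multiple of the identity: if $\phi(X)$ were scalar then $[X,Y]=0$ and hence $\phi(Z)=0$, contradicting non-degeneracy. So after the standard unitary diagonalization I may assume that $\phi(X)=\diag(x_1,x_2)$ with $x_1\neq x_2$ and likewise $\phi'(X)=\diag(\tilde x_1,\tilde x_2)$ with $\tilde x_1\neq\tilde x_2$.

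Next I would bring in the hypothesis of unitary equivalence. There is a single unitary $W$ conjugating the whole representation $\phi$ to $\phi'$; in particular $W\phi(X)W^\dagger=\phi'(X)$, so both $\diag(x_1,x_2)$ and $\diag(\tilde x_1,\tilde x_2)$ are diagonalizations of the same operator and hence $\{x_1,x_2\}=\{\tilde x_1,\tilde x_2\}$. This is precisely the setup of Lemma~\ref{lemma:unitary.2x2}, applied with $X=\phi(X)$, $\tilde X=\phi'(X)$, $U=W$, and $Y=\phi(Y)$. The lemma then yields one of its two explicit alternatives for $W\phi(Y)W^\dagger=\phi'(Y)$: either the diagonal order is preserved and the off-diagonal entry is multiplied by the unit-modulus phase $e^{i\varphi}$, or the diagonal order is swapped and the off-diagonal entry becomes $e^{-i\varphi}\bar z$ (with diagonal entries swapped as well).

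The point of invoking Lemma~\ref{lemma:unitary.2x2} is that in either alternative the \emph{moduli} of the matrix elements are matched up, just with a possible relabeling of the two vertices. Concretely, write $\phi(Y)$ with entries $y_1,y_2,z$ as in the lemma and $\phi'(Y)=W\phi(Y)W^\dagger$. In the first alternative the diagonal entries are unchanged and the off-diagonal entry $z$ is replaced by $e^{i\varphi}z$, so $y'_{11}=y_1$, $y'_{22}=y_2$ and $|y'_{12}|=|z|=|y_{12}|$; every entry of $\phi'(Y)$ is nonzero exactly when the corresponding entry of $\phi(Y)$ is nonzero, giving the identity graph isomorphism. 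In the second alternative the roles of the two vertices are interchanged: $y'_{11}=y_2$, $y'_{22}=y_1$, and $|y'_{12}|=|z|=|y_{12}|$, so an entry of $\phi'(Y)$ vanishes precisely when the correspondingly-transposed entry of $\phi(Y)$ vanishes, and the graph isomorphism is the one that swaps the two vertices. In both cases the presence or absence of each self-loop and of the connecting edge is preserved under a fixed relabeling of $\{1,2\}$, which is exactly a graph isomorphism between the graph of $\phi$ and the graph of $\phi'$.

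I do not expect a serious obstacle here, since the combinatorial content is essentially carried by Lemma~\ref{lemma:unitary.2x2}. The only point that needs care is the justification at the start that $\phi(X)$ and $\phi'(X)$ genuinely have two distinct diagonal values, so that the lemma's hypothesis $x_1\neq x_2$ applies; this is where non-degeneracy is used, and it also explains why the degenerate case (where $X$ could be scalar and the lemma's dichotomy would fail) is excluded from the statement. The remainder is the routine bookkeeping of reading off which entries vanish in each of the two alternatives of the lemma.
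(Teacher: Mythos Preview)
Your proposal is correct and follows essentially the same route as the paper: use non-degeneracy to ensure $x_1\neq x_2$, then invoke Lemma~\ref{lemma:unitary.2x2} to see that $\phi(Y)$ and $\phi'(Y)$ have the same pattern of nonzero entries up to a permutation of the two vertices. The paper states this more tersely, but the argument is the same.
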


\begin{proof}
  If $\phi$ is a non-degenerate representation with
  $\phi(X)=\diag(x_1,x_2)$ then one must necessarily have
  $x_1\neq x_2$ since, otherwise, $\phi(X)$ commutes with $\phi(Y)$
  (giving $\phi(Z)=0$). Then one may apply
  Lemma~\ref{lemma:unitary.2x2} to conclude that $\phi(Y)$ and
  $\phi'(Y)$ have exactly the same structure of non-zero matrix
  elements (up to a permutation of the rows and columns) implying that
  the corresponding graphs are isomorphic.
\end{proof}

\noindent
In particular, it follows from the above result that representations
with graphs of Type I and Type II are inequivalent.  Now, let us turn
to the task of finding concrete representations.

For a representation $\phi$ with
\begin{align*}
  \phi(X) =
  \begin{pmatrix}
    x_1 & 0 \\ 0 & x_2
  \end{pmatrix}\qquad
                   \phi(Y)=
                   \begin{pmatrix}
                     y_1 & z \\ \bar{z} & y_2
                   \end{pmatrix}
\end{align*}
equations \Aeq{ij} and \Beq{ij} become
\begin{align*}
  \paraa{&2(x_1-x_2)-\hbar^2p(x_1)}|z|^2-\hbar^2p'(x_1)y_1^2
           =\hbar^2p(x_1)p'(x_1)\tag{\Aeq{11}}\\
  \paraa{-&2(x_1-x_2)-\hbar^2p(x_2)}|z|^2-\hbar^2p'(x_2)y_2^2
            =\hbar^2p(x_2)p'(x_2)\tag{\Aeq{22}}\\
         &z\paraa{\qh(x_1,x_2)(y_1+y_2)-2x_1y_1-2x_2y_2} = 0\tag{\Aeq{12}}\\
         &y_1^3+(2y_1+y_2)|z|^2+p(x_1)y_1 = 0\tag{\Beq{11}}\\
         &y_2^3+(2y_2+y_1)|z|^2+p(x_2)y_2 = 0\tag{\Beq{22}}\\
         &y_1^2+y_2^2+y_1y_2+|z|^2=\tfrac{1}{2\hbar^2}\rh(x_1,x_2)\tag{\Beq{12}}.
\end{align*}
If $z\neq 0$ and $x_1=-x_2=x\neq 0$ the above equations are equivalent
to
\begin{align}
  &\paraa{4x-\hbar^2p(x)}|z|^2-\hbar^2p'(x)y^2=\hbar^2p(x)p'(x)\label{eq:2d.xmx.A11}\\ %\tag{\Aeq{11}}\\
  %&x(y_1-y_2) = 0\tag{\Aeq{12}}\\
  &y\paraa{y^2+3|z|^2+p(x)} = 0\label{eq:2d.xmx.B11}\\ %\tag{\Beq{11}}\\
  &3y^2+|z|^2 = \frac{2x^2}{\hbar^2}-p(x)\label{eq:2d.xmx.B12}%\tag{\Beq{12}}
\end{align}
with $y_1=y_2=y$. Let us start by considering representations of Type I.

\begin{proposition}\label{prop:2dim.rep.string}
  $\phi$ is a non-degenerate representation of $\Cgh$ such that
  \begin{align*}
    \phi(X) =
    \begin{pmatrix}
      \xh & 0 \\
      0 & -\xh
    \end{pmatrix}\qquad
    \phi(Y)=
    \begin{pmatrix}
      0 & z \\ \bar{z} & 0
    \end{pmatrix}\qquad
                         \phi(Z)=\frac{2i}{\hbar}
                         \begin{pmatrix}
                           0 & -\xh z\\
                           \xh \bar{z} & 0
                         \end{pmatrix}    
  \end{align*}
  if and only if
  \begin{align*}
    z = \frac{1}{\hbar}e^{i\theta}\sqrt{2\hat{x}^2-\hbar^2p(\hat{x})}
  \end{align*}
  for $\theta\in\reals$ and $\xh$ satisfying
  \begin{align}
      &2p(\xh)+\xh p'(\xh)-\frac{4\xh^2}{\hbar^2} = 0\label{eq:2dim.x.p.pp}\\
      &2\xh^2-\hbar^2p(\xh)>0.\label{eq:2dim.x.p.geq}
  \end{align}
\end{proposition}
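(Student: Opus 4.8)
The plan is to regard the three displayed matrices as an ansatz and determine precisely when they satisfy the defining relations of $\Cgh$. Since $\phi(Z)$ is prescribed to be $[\phi(X),\phi(Y)]/(i\hbar)$, relation \eqref{eq:XYcom} holds by construction, and a direct $2\times 2$ multiplication shows that $[X,Y]/(i\hbar)$ equals the displayed $\phi(Z)$; thus that form of $\phi(Z)$ is a consequence rather than an extra hypothesis. It then remains to check \eqref{eq:YZcom} and \eqref{eq:ZXcom}, equivalently the matrix-element equations \Aeq{ij} and \Beq{ij} for $i\leq j$, under the substitution $x_1=-x_2=\xh$, $y_1=y_2=0$ and $y_{12}=z$.

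First I would discard the equations that become trivial. Since $y_1=y_2=0$, equations \Beq{11} and \Beq{22} read $0=0$, and because $\qh(\xh,-\xh)=0$ the same is true of \Aeq{12}. Using the parities $p(-x)=p(x)$ and $p'(-x)=-p'(x)$ one checks that \Aeq{22} is identical to \Aeq{11}, so the whole system collapses to just \Aeq{11} and \Beq{12}. Specialising \Aeq{ii} to $i=1$, $n=2$ with $y_1=0$ leaves only the $k=2$ term and gives $\paraa{4\xh-\hbar^2p'(\xh)}|z|^2=\hbar^2p(\xh)p'(\xh)$, while \Beq{12} together with $\rh(\xh,-\xh)=4\xh^2-2\hbar^2p(\xh)$ gives $|z|^2=\paraa{2\xh^2-\hbar^2p(\xh)}/\hbar^2$.

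The core of the argument is then elementary algebra. Equation \Beq{12} fixes $|z|^2$, and since non-degeneracy forces $z\neq 0$ the requirement $|z|^2>0$ is exactly \eqref{eq:2dim.x.p.geq}; writing $z=\hbar^{-1}e^{i\theta}\sqrt{2\xh^2-\hbar^2p(\xh)}$ records the free phase $\theta\in\reals$. Substituting this value of $|z|^2$ into the reduced \Aeq{11} and clearing $\hbar^2$, the terms proportional to $\hbar^4p(\xh)p'(\xh)$ cancel, leaving $8\xh^3-4\xh\hbar^2p(\xh)-2\xh^2\hbar^2p'(\xh)=0$; dividing by $2\xh$ (permissible since non-degeneracy forces $\xh\neq 0$) yields precisely \eqref{eq:2dim.x.p.pp}. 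The converse is obtained by running these equivalences backwards: given \eqref{eq:2dim.x.p.pp} and \eqref{eq:2dim.x.p.geq}, one defines $|z|^2$ through \Beq{12} (positive by \eqref{eq:2dim.x.p.geq}), so that \Beq{12} holds by construction and \Aeq{11} holds on account of \eqref{eq:2dim.x.p.pp}, all other equations being trivial. Here $\xh\neq 0$ is forced, since $\xh=0$ would make \eqref{eq:2dim.x.p.pp} read $p(0)=0$ while \eqref{eq:2dim.x.p.geq} reads $p(0)<0$; consequently the displayed $\phi(Z)$ is nonzero and $\phi$ is non-degenerate.

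The step needing the most care is the cancellation in \Aeq{11}. One must use the coefficient $4\xh-\hbar^2p'(\xh)$ furnished by \Aeq{ii}, so that the coefficient of $|z|^2$ is governed by $p'$; only then do the $p\,p'$ contributions on the two sides match and the identity collapse to the clean relation \eqref{eq:2dim.x.p.pp}. Everything else is routine bookkeeping of which matrix equations survive the symmetric, zero-diagonal ansatz.
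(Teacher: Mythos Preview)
Your proposal is correct and follows essentially the same route as the paper's proof: after noting that non-degeneracy forces $\xh,z\neq 0$, both arguments reduce the system to the pair \Aeq{11} and \Beq{12}, read off $|z|^2$ from \Beq{12}, and substitute into \Aeq{11} to obtain \eqref{eq:2dim.x.p.pp}, with the same $\xh=0$ contradiction establishing non-degeneracy in the converse. Your write-up is simply more explicit about which equations trivialize and about the cancellation in \Aeq{11}, but there is no substantive difference in strategy.
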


\begin{proof}
  Let $X$ and $Y$ be matrices of a 2-dimensional non-degenerate
  representation of $\Cgh$ of the form above. Since $Z\neq 0$ we
  necessarily have $\xh,z\neq 0$. Equations
  \eqref{eq:2d.xmx.A11}--~\eqref{eq:2d.xmx.B12} (with $y=0$) then
  become
  \begin{align*}
    \paraa{4x-\hbar^2p'(x)}|z|^2 &= \hbar^2p(x)p'(x)\\
    |z|^2 &= \frac{2x^2}{\hbar^2}-p(x)
  \end{align*}
  which are equivalent to
  \begin{align*}
    % &\frac{4x^2}{\hbar^2}-xp'(x)-2p(x)=0\\
      &2p(x)+xp'(x)-\frac{4x^2}{\hbar^2} = 0\\
      &|z|^2 = \frac{2x^2}{\hbar^2}-p(x).
  \end{align*}
  The fact that $\phi$ is non-degenerate implies that $z\neq 0 $ which
  necessarily gives $\frac{2x^2}{\hbar^2}-p(x)>0$, proving the first
  part of the statement.

  Conversely, if $\xh$ is a solution of \eqref{eq:2dim.x.p.pp} such
  that $\frac{2\xh^2}{\hbar^2}-p(\xh)>0$ then one may construct a solution
  by setting
  \begin{align*}
    z = \frac{1}{\hbar}e^{i\theta}\sqrt{2\hat{x}^2-\hbar^2p(\hat{x})}\neq 0
  \end{align*}
  for arbitrary $\theta\in\reals$. Finally, to prove that $\phi$ is
  non-degenerate (i.e. $Z\neq 0)$, we need to show that $\xh=0$ is
  never a solution to \eqref{eq:2dim.x.p.pp} and
  \eqref{eq:2dim.x.p.geq}.  If $\xh=0$ is a solution to
  \eqref{eq:2dim.x.p.pp} then $p(\xh)=0$, which implies that
  $\frac{2\xh^2}{\hbar^2}-p(\xh)=0$, which does not fulfill
  \eqref{eq:2dim.x.p.geq}. Hence, any solution to
  \eqref{eq:2dim.x.p.pp} and \eqref{eq:2dim.x.p.geq} is non-zero.
\end{proof}

\noindent
The next result ensures that one may find a Type I representation of
$\Cgh$ for any value of the deformation parameter $\hbar>0$.

\begin{proposition}\label{prop:f.tau.solution}
  For $\hbar>0$ and $g\geq 2$, there exists $\xh>g-1$ such that
  \begin{align*}
    2p(\xh)+\xh p'(\xh)-\frac{4\xh^2}{\hbar^2}
    =0\mathand \frac{2\xh^2}{\hbar^2}-p(\xh)>0.
  \end{align*}
\end{proposition}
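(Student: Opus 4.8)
The plan is to recast the existence question as an application of the intermediate value theorem to the polynomial
\[
f(x) = 2p(x)+xp'(x)-\frac{4x^2}{\hbar^2},
\]
whose zeros are precisely the solutions of \eqref{eq:2dim.x.p.pp}. First I would note that $f$ is a genuine polynomial of degree $2g$ with positive leading coefficient $(2+2g)\alpha$: the terms $2p$ and $xp'$ each contribute a multiple of $x^{2g}$, while $4x^2/\hbar^2$ is of lower degree because $g\geq 2$. Hence $f(x)\to+\infty$ as $x\to\infty$. Next I would evaluate $f$ at the integer point $x=g-1$, which lies in $\{1,\dots,g\}$ since $g\geq 2$. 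Using $p(g-1)=-\sqrt{c}$ together with \eqref{eq:pp.integer.value}, which gives $(g-1)p'(g-1)=-\alpha(2g-1)!$, one obtains
\[
f(g-1)=-2\sqrt{c}-\alpha(2g-1)!-\frac{4(g-1)^2}{\hbar^2}<0,
\]
every term being negative. Combined with $f\to+\infty$, the intermediate value theorem yields a root $\xh>g-1$ of $f$, i.e.\ a solution of \eqref{eq:2dim.x.p.pp}.

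It then remains to verify the strict inequality \eqref{eq:2dim.x.p.geq} at such a root. The key observation is that at any zero of $f$ one may solve $2p(\xh)=4\xh^2/\hbar^2-\xh p'(\xh)$ and substitute to obtain the identity
\[
\frac{2\xh^2}{\hbar^2}-p(\xh)=\tfrac{1}{2}\xh p'(\xh).
\]
I would then distinguish two cases. If $\xh\in(g-1,g]$, then $\xh^2\in((g-1)^2,g^2]$ forces the single factor $x^2-g^2$ of $G(x^2)=\prod_{k=1}^g(x^2-k^2)$ to be non-positive while all others are positive, so $p(\xh)<0$ (using $p(g)=-\sqrt{c}$ at the endpoint); hence $\frac{2\xh^2}{\hbar^2}-p(\xh)>0$ directly. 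If instead $\xh>g$, then every factor $x^2-l^2$ is positive, so $p'(\xh)>0$, and the displayed identity gives $\frac{2\xh^2}{\hbar^2}-p(\xh)=\tfrac12\xh p'(\xh)>0$. In either case \eqref{eq:2dim.x.p.geq} holds, completing the proof.

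The main obstacle is not the production of a root—that is immediate once one spots the clean negative sign of $f(g-1)$—but rather the control of the second inequality, since the intermediate value theorem gives no information about the location of the root. The case split according to whether $\xh$ lies in $(g-1,g]$ or beyond $g$ is exactly what resolves this: on the first interval $p$ is negative, while beyond $g$ the derivative $p'$ is positive, and the algebraic identity $\frac{2\xh^2}{\hbar^2}-p(\xh)=\tfrac12\xh p'(\xh)$ converts the latter sign fact into the required strict inequality.
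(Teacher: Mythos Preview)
Your argument is correct and essentially the same as the paper's: both apply the intermediate value theorem to $f$ using $f(g-1)<0$ and the positive leading coefficient, then split cases at $x=g$ and use the identity $\tfrac{2\xh^2}{\hbar^2}-p(\xh)=\tfrac12\xh p'(\xh)$ at a root. The only cosmetic difference is that in the interval $(g-1,g]$ you conclude $\tfrac{2\xh^2}{\hbar^2}-p(\xh)>0$ directly from $p(\xh)<0$, whereas the paper routes this through the identity after first observing $\xh p'(\xh)=\tfrac{4\xh^2}{\hbar^2}-2p(\xh)>0$; the content is the same.
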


\begin{proof}
  First we note that if $2p(\xh)+\xh p'(\xh)-4\xh^2/\hbar^2=0$ and $\xh p'(\xh)>0$ then
  \begin{align*}
    \frac{2\xh^2}{\hbar^2}-p(\xh) =
    \frac{2\xh^2}{\hbar^2}+\frac{1}{2}\xh p'(\xh)-\frac{2\xh^2}{\hbar^2}
    =\frac{1}{2}\xh p'(\xh) > 0.
  \end{align*}
  Writing $\fh(x) = 2p(x)+xp'(x)-4x^2/\hbar^2$ one finds that
  \begin{align*}
    \fh(g-1) = -2\sqrt{c} + (g-1)p'(g-1)-\frac{4(g-1)^2}{\hbar^2}<0
  \end{align*}
  since $p'(g-1)<0$ (cf. \eqref{eq:pp.integer.value}).
  % \begin{align*}
  %   p'(g-1) = 2\alpha(g-1)\paraa{(g-1)^2-1}\cdots\paraa{(g-1)^2-(g-2)^2}\paraa{(g-1)^2-g^2}<0.
  % \end{align*}
  For $g\geq 2$, $\fh(x)$ is a polynomial of degree $2g$ with a
  positive coefficient of $x^{2g}$, which implies that $\fh(x)$ is
  positive for large $x$. In particular, there exists $\xh>g-1$ such that
  $\fh(\xh)=0$. If $\xh\geq g$ then $\xh p'(\xh)>0$ since $xp'(x)>0$
  for all $x\geq g$. If $\xh\in(g-1,g)$ and $\fh(\xh)=0$ then
  $\xh p'(\xh)>0$ due to the fact that $p(x)<0$ for all $x\in(g-1,g)$ and
  \begin{align*}
    \xh p'(\xh) = \frac{4\xh^2}{\hbar^2}-2p(\xh).
  \end{align*}
  From the argument in the beginning of the proof, it follows that
  \begin{equation*}
    2\xh^2/\hbar^2-p(\xh)>0.\qedhere
  \end{equation*}
\end{proof}

\noindent
In the case when $g=1$, one obtains
\begin{align*}
  2p(x)+xp'(x)-\frac{4x^2}{\hbar^2}
  = 4x^2\parab{\alpha-\frac{1}{\hbar^2}}-2(\alpha+\sqrt{c})
\end{align*}
which does not have any real solutions for small enough
$\hbar$. However, this case has been treated thoroughly in
\cite{abhhs:fuzzy,abhhs:noncommutative}.

In any case, we have shown the existence of representations for
arbitrary $g\geq 2$ and values of the deformation parameter
$\hbar$. Let us state this result as follows.

\begin{corollary}
  For $g\geq 2$, $\hbar>0$, $c>0$ and $\alpha\in(0,2\sqrt{c}/M)$,
  there exists a 2-dimensional non-degenerate representation of
  $\Cgh(\alpha,c)$.
\end{corollary}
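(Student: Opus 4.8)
The plan is to deduce this corollary directly by combining the two preceding propositions, which together already contain all the analytic and algebraic content. The statement merely packages the existence of a solution of the defining equations (Proposition~\ref{prop:f.tau.solution}) with the explicit construction of a representation from such a solution (Proposition~\ref{prop:2dim.rep.string}).

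First I would invoke Proposition~\ref{prop:f.tau.solution}: for the given $g\geq 2$ and $\hbar>0$ it supplies a value $\xh>g-1$ satisfying
\begin{align*}
  2p(\xh)+\xh p'(\xh)-\frac{4\xh^2}{\hbar^2}=0 \qand
  \frac{2\xh^2}{\hbar^2}-p(\xh)>0,
\end{align*}
which, after multiplying the second inequality by $\hbar^2>0$, are exactly conditions \eqref{eq:2dim.x.p.pp} and \eqref{eq:2dim.x.p.geq}. Next I would feed this $\xh$ into Proposition~\ref{prop:2dim.rep.string}: setting
\begin{align*}
  z=\frac{1}{\hbar}e^{i\theta}\sqrt{2\xh^2-\hbar^2 p(\xh)}
\end{align*}
for some (any) $\theta\in\reals$ yields matrices $\phi(X)=\diag(\xh,-\xh)$, $\phi(Y)$ and $\phi(Z)$ of the stated form that satisfy all of the representation relations \eqref{eq:XYcom}--\eqref{eq:ZXcom}. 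The standing hypotheses $c>0$ and $\alpha\in(0,2\sqrt{c}/M)$ enter only through the definition of $\Cgh(\alpha,c)$ and the polynomial $p$, and are already assumed throughout both propositions.

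The only point that requires attention — and essentially the sole nontrivial one once the propositions are in hand — is that the resulting representation is genuinely two dimensional and non-degenerate. Non-degeneracy is built into Proposition~\ref{prop:2dim.rep.string}, whose final paragraph shows that any solution of \eqref{eq:2dim.x.p.pp}--\eqref{eq:2dim.x.p.geq} forces $\xh\neq 0$ and hence $\phi(Z)\neq 0$; here this is immediate since $\xh>g-1\geq 1>0$. Moreover $z\neq 0$ because condition \eqref{eq:2dim.x.p.geq} makes the radicand strictly positive, so $\phi(Y)$ is genuinely off-diagonal and does not commute with the diagonal matrix $\phi(X)=\diag(\xh,-\xh)$, whose entries are distinct. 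Thus $\phi$ is a bona fide 2-dimensional non-degenerate representation of $\Cgh(\alpha,c)$.

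Since the two input propositions have already done all the real work, I do not anticipate any genuine obstacle here; the proof is a short assembly step, and the hardest part lies entirely upstream in establishing the existence of the root $\xh$ in Proposition~\ref{prop:f.tau.solution}. If one wanted more than the stated conclusion, the natural (but unneeded) follow-up would be to note irreducibility, which is automatic for a non-degenerate 2-dimensional representation: a proper invariant subspace would be 1-dimensional and would force $\phi(X)$ and $\phi(Y)$ to be simultaneously diagonalizable, contradicting $\phi(Z)\neq 0$.
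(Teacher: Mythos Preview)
Your proposal is correct and matches the paper's approach exactly: the corollary is stated immediately after Propositions~\ref{prop:2dim.rep.string} and~\ref{prop:f.tau.solution} as a direct consequence, with no separate proof given. Your additional remarks on non-degeneracy and irreducibility are accurate but, as you note, already handled upstream.
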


\noindent
Next, we consider representations of Type II.

\begin{proposition}\label{prop:2d.typeII}
  $\phi$ is a non-degenerate representation of $\Cgh$ such that
  \begin{align*}
    \phi(X) =
    \begin{pmatrix}
      \xh & 0 \\
      0 & -\xh
    \end{pmatrix}\qquad
    \phi(Y)=
    \begin{pmatrix}
      y_1 & z \\ \bar{z} & y_2
    \end{pmatrix}\qquad
                         \phi(Z)=\frac{2i}{\hbar}
                         \begin{pmatrix}
                           0 & -\xh z\\
                           \xh\bar{z} & 0
                         \end{pmatrix}        
  \end{align*}
  with $y_1\neq 0$, if and only if
  \begin{align*}
    y_1 = y_2 = \pm\frac{1}{2\hbar}&\sqrt{3\xh^2-\hbar^2p(\xh)}\\
    z =  \frac{1}{2\hbar}e^{i\theta}&\sqrt{-\xh^2-\hbar^2p(\xh)}
  \end{align*}
  for $\theta\in\reals$ and $\xh\neq 0$ satisfying
  \begin{align}
    &2p(\xh)-\xh p'(\xh)<0\label{eq:2d.typeII.prop.xpeq}\\
    &2\xh+\hbar^2p'(\xh) = 0.\label{eq:2d.typeII.prop.ineq}
  \end{align}
\end{proposition}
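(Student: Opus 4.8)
The plan is to prove both implications by reducing to the three equations \eqref{eq:2d.xmx.A11}, \eqref{eq:2d.xmx.B11} and \eqref{eq:2d.xmx.B12}, which are equivalent to the full system \Aeq{11}--\Beq{12} once one knows that $x_1=-x_2=\xh\neq 0$ and $z\neq 0$. In particular the off-diagonal equation \Aeq{12} forces $y_1=y_2=:y$ (using $\qh(\xh,-\xh)=0$), and the displayed form of $\phi(Z)$ is just $[\phi(X),\phi(Y)]/(i\hbar)$, so it is not an extra condition. Since $\phi$ is non-degenerate, $\phi(Z)\neq 0$ forces $\xh\neq 0$ and $z\neq 0$, so the reduction applies throughout; what distinguishes Type~II from Type~I is the extra hypothesis $y_1\neq 0$.

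For the forward direction I would start from $y\neq 0$, so that \eqref{eq:2d.xmx.B11} reads $y^2+3|z|^2+p(\xh)=0$. Together with \eqref{eq:2d.xmx.B12} this is a linear system in the unknowns $y^2$ and $|z|^2$ whose unique solution is
\begin{align*}
  y^2 = \frac{3\xh^2-\hbar^2p(\xh)}{4\hbar^2}\mathand
  |z|^2 = \frac{-\xh^2-\hbar^2p(\xh)}{4\hbar^2},
\end{align*}
which yields exactly the asserted formulas for $y_1=y_2$ and $|z|$, the phase of $z$ and the sign of $y$ remaining free. It then remains to feed this into \eqref{eq:2d.xmx.A11}, whose coefficient of $|z|^2$ is $4\xh-\hbar^2p'(\xh)$ (the same coefficient used in the proof of Proposition~\ref{prop:2dim.rep.string}). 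Substituting the two values above and clearing the denominator $4\hbar^2$, I expect the identity to factor as
\begin{align*}
  \paraa{\xh^2+\hbar^2p(\xh)}\paraa{2\xh+\hbar^2p'(\xh)} = 0 .
\end{align*}
Since $|z|^2>0$ is equivalent to $\xh^2+\hbar^2p(\xh)<0$, the first factor is nonzero, so the second must vanish, which is precisely \eqref{eq:2d.typeII.prop.ineq}. Finally I would translate the positivity $\xh^2+\hbar^2p(\xh)<0$ into \eqref{eq:2d.typeII.prop.xpeq}: multiplying $2p(\xh)-\xh p'(\xh)$ by $\hbar^2$ and using $\hbar^2p'(\xh)=-2\xh$ from \eqref{eq:2d.typeII.prop.ineq} gives $2\hbar^2p(\xh)+2\xh^2$, so the two inequalities are equivalent. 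One also notes $y^2>0$ is automatic, since $\xh\neq 0$ gives $\hbar^2p(\xh)<-\xh^2<3\xh^2$.

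For the converse, given $\xh\neq 0$ satisfying \eqref{eq:2d.typeII.prop.xpeq} and \eqref{eq:2d.typeII.prop.ineq}, I would define $y_1=y_2$ and $z$ by the stated formulas. The equivalence just established gives $\xh^2+\hbar^2p(\xh)<0$, hence $|z|^2>0$ and $y^2>0$, so the matrices are well defined and hermitian. By construction these values solve \eqref{eq:2d.xmx.B11} (using $y\neq 0$) and \eqref{eq:2d.xmx.B12}, while \eqref{eq:2d.xmx.A11}, after the same substitution, reduces to the factored identity above whose second factor vanishes by \eqref{eq:2d.typeII.prop.ineq}. Thus all of \Aeq{11}--\Beq{12} hold, $\phi$ is a representation, and it is non-degenerate because $\xh,z\neq 0$ force $\phi(Z)\neq 0$.

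The main obstacle is the algebraic step in which \eqref{eq:2d.xmx.A11}, after inserting the solved values of $y^2$ and $|z|^2$, is shown to collapse to the product $\paraa{\xh^2+\hbar^2p(\xh)}\paraa{2\xh+\hbar^2p'(\xh)}$. Carrying this out cleanly requires using that $p$ is even and $p'$ is odd (so that \Aeq{11} and \Aeq{22} coincide and only one $A$-equation survives) and recognizing the common factor $\xh^2+\hbar^2p(\xh)$; everything else is routine bookkeeping or a direct rewriting of the positivity constraint as \eqref{eq:2d.typeII.prop.xpeq}.
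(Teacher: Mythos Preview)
Your argument is correct and follows essentially the same route as the paper: solve the linear system coming from \eqref{eq:2d.xmx.B11} and \eqref{eq:2d.xmx.B12} for $y^2$ and $|z|^2$, substitute into \eqref{eq:2d.xmx.A11} to obtain the factorization $(\xh^2+\hbar^2p(\xh))(2\xh+\hbar^2p'(\xh))=0$, and then use $z\neq 0$ to kill the first factor and translate the inequality via $\hbar^2p'(\xh)=-2\xh$. Your identification of the $|z|^2$-coefficient in \eqref{eq:2d.xmx.A11} as $4\xh-\hbar^2p'(\xh)$ is in fact the correct one (consistent with \Aeq{ii} and with the Type~I computation), and is what the paper's stated equivalence to the factored form actually requires.
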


\begin{proof}
  Assume that $X$ and $Y$ are matrices of a non-degenerate
  2-dimensional representation of the form above with $y_1\neq
  0$. Since $Z\neq 0$, we necessarily have that $\xh,z\neq 0$. As
  already noted, when $x_1=-x_2$ and $z\neq 0$, equation \Aeq{12}
  implies that $y_1=y_2=y$. Since $y=y_1\neq 0$, equations
  \eqref{eq:2d.xmx.A11}--\eqref{eq:2d.xmx.B12} become
  \begin{align*}
    &\paraa{4x-\hbar^2p(x)}|z|^2-\hbar^2p'(x)y^2=\hbar^2p(x)p'(x)\\
    &y^2+3|z|^2+p(x) = 0\\
    &3y^2+|z|^2 = \frac{2x^2}{\hbar^2}-p(x)
  \end{align*}
  which are equivalent to
  \begin{align}
    &\paraa{2x+\hbar^2p'(x)}\paraa{x^2+\hbar^2p(x)} = 0\label{eq:2d.typeII.factoreq}\\
    &y^2 = \frac{1}{4\hbar^2}\paraa{3x^2-\hbar^2p(x)}\label{eq:2d.typeII.ysq}\\
    &|z|^2 =-\frac{1}{4\hbar^2}\paraa{x^2+\hbar^2 p(x)}.\label{eq:2d.typeII.zsq}
  \end{align}
  From \eqref{eq:2d.typeII.factoreq} it follows that either
  $2x+\hbar^2p'(x)=0$ or $x^2+\hbar^2p(x)=0$. However, if
  $x^2+\hbar^2p(x)=0$ then $z=0$, which contradicts the assumption
  that $\phi$ is non-degenerate. Hence, it must hold that
  $2\xh+\hbar^2p'(\xh)=0$. Moreover, from \eqref{eq:2d.typeII.zsq} it
  follows that $\xh^2+\hbar^2p(\xh)<0$ which, by inserting
  $\xh^2=-\tfrac{1}{2}\hbar^2\xh p'(\xh)$ gives $2p(\xh)-\xh p'(\xh)<0$.
  Conversely, assume that \eqref{eq:2d.typeII.prop.xpeq} and
  \eqref{eq:2d.typeII.prop.ineq} holds for some $\xh\in\reals$, giving
  \begin{align*}
    \xh p'(\xh) = -\frac{2\xh^2}{\hbar^2}\mathand
    \xh^2+\hbar^2p(\xh)<0,
  \end{align*}
  implying that \eqref{eq:2d.typeII.zsq} is satisfied by defining
  \begin{align*}
    z =  \frac{1}{2\hbar}e^{i\theta}&\sqrt{|\xh^2+\hbar^2p(\xh)|}\neq 0
  \end{align*}
  for arbitrary $\theta\in\reals$.  Moreover, since
  $\xh^2+\hbar^2p(\xh)<0$ it follows that
  \begin{align*}
    3\xh^2-\hbar^2p(\xh)>3\xh^2+\xh^2=4\xh^2>0,
  \end{align*}
  implying that
  \begin{align*}
    y=\pm\frac{1}{2\hbar}&\sqrt{3\xh^2-\hbar^2p(\xh)}\neq 0
  \end{align*}
  solves \eqref{eq:2d.typeII.ysq}. Finally, equation
  \eqref{eq:2d.typeII.factoreq} is satisfied since
  $2\xh+\hbar^2p'(\xh)=0$. The representation defined in this way will
  be non-degenerate since $\xh\neq 0$ (by assumption) and $z\neq 0$
  due to $\xh^2+\hbar^2p(\xh)<0$.
\end{proof}

\noindent
Thus, to construct a representation of Type II as in
Proposition~\ref{prop:2d.typeII} one needs to find $\xh$ such that
\begin{align*}
  2p(\xh)-\xh p'(\xh)<0\\
  2\xh+\hbar^2p'(\xh) = 0.
\end{align*}
It is useful to think of the second equation as determining
$\hbar$. That is, for any $\xh\in\reals$ such that $p'(\xh)<0$, we can
consider Type II representations of $\Cgh$ with
\begin{align*}
  \hbar=\sqrt{-\frac{2\xh}{p'(\xh)}}
\end{align*}
satisfying $2\xh+\hbar^2p'(\xh)=0$.  The next result guarantees one
may find such an $\xh$ also satisfying $2p(\xh)-\xh p'(\xh)<0$.

\begin{proposition}
  If $\alpha/\sqrt{c}<2/(2g-1)!$ then
  \begin{align*}
    p'(g-1)<0\mathand 2p(g-1)-(g-1)p'(g-1)<0.
  \end{align*}
\end{proposition}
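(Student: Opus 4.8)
The plan is to exploit the fact that $x=g-1$ is a distinguished integer point at which both $p$ and $p'$ take particularly clean values (throughout I use $g\geq 2$, as in the preceding results, so that $g-1\in\{1,\ldots,g\}$). First I would obtain $p'(g-1)$ by substituting $k=g-1$ into the integer-value formula \eqref{eq:pp.integer.value}. This gives $p'(g-1)=(-1)^{g-(g-1)}\alpha(2g-1)!/(g-1)=-\alpha(2g-1)!/(g-1)$, which is strictly negative since $\alpha>0$ and $g\geq 2$; this is precisely the first of the two claimed inequalities.

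For the second inequality, the key observation is that $(g-1)^2$ is a root of $G$: the factor indexed by $k=g-1$ in $G(x^2)=\prod_{k=1}^g(x^2-k^2)$ vanishes at $x=g-1$. Hence $G\paraa{(g-1)^2}=0$ and so $p(g-1)=\alpha G\paraa{(g-1)^2}-\sqrt{c}=-\sqrt{c}$. Combining this with the expression for $p'(g-1)$ found above yields $(g-1)p'(g-1)=-\alpha(2g-1)!$, and therefore $2p(g-1)-(g-1)p'(g-1)=-2\sqrt{c}+\alpha(2g-1)!$.

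The remaining step is purely algebraic: the last expression is negative exactly when $\alpha(2g-1)!<2\sqrt{c}$, that is, when $\alpha/\sqrt{c}<2/(2g-1)!$, which is the hypothesis. Both inequalities then follow.

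I do not anticipate a genuine obstacle here. The only points requiring care are verifying that $g-1$ simultaneously lies in the admissible range $\{1,\ldots,g\}$ for formula \eqref{eq:pp.integer.value} and is a root of $G(x^2)$ --- both hold once $g\geq 2$. The essential content is simply recognizing that $g-1$ is the special integer at which $p$ collapses to $-\sqrt{c}$ and $p'$ simplifies, after which the claim reduces immediately to the assumed bound on $\alpha/\sqrt{c}$.
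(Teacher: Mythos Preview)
Your proposal is correct and follows essentially the same route as the paper: both use \eqref{eq:pp.integer.value} to see that $p'(g-1)<0$, then compute $2p(g-1)-(g-1)p'(g-1)=-2\sqrt{c}+\alpha(2g-1)!$ and invoke the hypothesis $\alpha/\sqrt{c}<2/(2g-1)!$. You simply spell out the intermediate values $p(g-1)=-\sqrt{c}$ and $(g-1)p'(g-1)=-\alpha(2g-1)!$ a bit more explicitly than the paper does.
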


\begin{proof}
  One notes immediately that $p'(g-1)<0$ from
  \eqref{eq:pp.integer.value}. Moreover, one obtains
  \begin{align*}
    2p(g-1)-(g-1)p'(g-1) &= -2\sqrt{c}+\alpha(2g-1)!\\
    &=\sqrt{c}(2g-1)!\parab{\frac{\alpha}{\sqrt{c}}-\frac{2}{(2g-1)!}}<0
  \end{align*}
  by assumption.
\end{proof}

\noindent
Hence, for $\alpha/\sqrt{c}<2/(2g-1)!$ one may construct a Type II
representation of $\Cgh$ as in Proposition~\ref{prop:2d.typeII} with
\begin{align*}
  \hbar = \sqrt{-\frac{2(g-1)}{p'(g-1)}} = \sqrt{\frac{2(g-1)^2}{\alpha(2g-1)!}}
\end{align*}
and $\xh=g-1$, giving
\begin{align*}
  &y = \pm\frac{1}{2}\sqrt{\sqrt{c}+\tfrac{3}{2}\alpha(2g-1)!}\\
  &z = \frac{1}{2}e^{i\theta}\sqrt{\sqrt{c}-\tfrac{1}{2}\alpha(2g-1)!}.
\end{align*}
The above considerations illustrate a property which is
generic in the context of matrix regularizations. Namely, the
deformation parameter $\hbar$ is related to the dimension of the
representation giving restrictions on for which $\hbar$ an
$N$-dimensional representation may exist. 

It is clear from Proposition~\ref{prop:2d.graph.rep.iso} that
representations of Type I and II are not equivalent. However, we would
like to investigate to what extent there are inequivalent
representations of the same type.

\begin{proposition}\label{prop:2d.typeI.equivalence}
  Let $\phi$ and $\phi'$ be representations as in
  Proposition~\ref{prop:2dim.rep.string} with
  \begin{align*}
    \phi(X) =
    \begin{pmatrix}
      \xh & 0 \\ 0 & -\xh
    \end{pmatrix}\mathand
                     \phi'(X) =
                     \begin{pmatrix}
                       \xh' & 0 \\ 0 & -\xh'
                     \end{pmatrix}.
  \end{align*}
  Then $\phi$ is unitarily equivalent to $\phi'$ if and only if
  $\xh=\pm\xh'$.
\end{proposition}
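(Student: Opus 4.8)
The plan is to prove the two implications separately, with the forward direction (``only if'') following essentially immediately from Lemma~\ref{lemma:unitary.2x2}, and the reverse direction (``if'') requiring an explicit conjugating unitary to be written down by hand. Two facts will be used repeatedly: that $p$ is even, so that $p(-\xh)=p(\xh)$ (this is built into $p(x)=\alpha G(x^2)-\sqrt{c}$), and that a representation as in Proposition~\ref{prop:2dim.rep.string} is non-degenerate, which forces $\xh\neq 0$ and hence makes the two diagonal entries $\pm\xh$ of $\phi(X)$ distinct.

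For the forward direction I would start from a unitary $U$ with $U\phi(X)U^\dagger=\phi'(X)$ and $U\phi(Y)U^\dagger=\phi'(Y)$. Since $\xh\neq 0$ I may apply Lemma~\ref{lemma:unitary.2x2} with $X=\phi(X)=\diag(\xh,-\xh)$, $\tilde X=\phi'(X)$ and $Y=\phi(Y)$; the lemma forces $\phi'(X)$ to equal either $\diag(\xh,-\xh)$ or $\diag(-\xh,\xh)$. Comparing with $\phi'(X)=\diag(\xh',-\xh')$ then yields $\xh'=\xh$ or $\xh'=-\xh$, i.e.\ $\xh=\pm\xh'$.

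For the reverse direction I would construct $U$ explicitly in the two cases. Write the off-diagonal entries of $\phi(Y)$ and $\phi'(Y)$ as $z=\tfrac1\hbar e^{i\theta}\sqrt{2\xh^2-\hbar^2p(\xh)}$ and $z'=\tfrac1\hbar e^{i\theta'}\sqrt{2\xh'^2-\hbar^2p(\xh')}$, recalling that for Type~I the diagonal of $Y$ vanishes. If $\xh=\xh'$ then $\phi(X)=\phi'(X)$ and $|z|=|z'|$, so conjugation by the diagonal unitary $\diag(1,e^{i(\theta-\theta')})$ leaves $\phi(X)$ unchanged, preserves the zero diagonal of $\phi(Y)$, and sends its $(1,2)$-entry $z$ to $z'$, hence $\phi(Y)$ to $\phi'(Y)$. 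If $\xh=-\xh'$, I would first conjugate by the swap $\left(\begin{smallmatrix}0&1\\1&0\end{smallmatrix}\right)$, which turns $\diag(\xh,-\xh)$ into $\diag(-\xh,\xh)=\diag(\xh',-\xh')=\phi'(X)$ and replaces the $(1,2)$-entry by $\bar z$; since evenness of $p$ gives $2\xh'^2-\hbar^2p(\xh')=2\xh^2-\hbar^2p(\xh)$ and thus $|\bar z|=|z'|$, a further diagonal phase unitary brings the entry to $z'$. In either case $U\phi(X)U^\dagger=\phi'(X)$ and $U\phi(Y)U^\dagger=\phi'(Y)$, and then $U\phi(Z)U^\dagger=U[\phi(X),\phi(Y)]U^\dagger/(i\hbar)=[\phi'(X),\phi'(Y)]/(i\hbar)=\phi'(Z)$ follows automatically, so $\phi$ and $\phi'$ are unitarily equivalent.

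All of the computations here are elementary, so I do not expect a genuine obstacle; the only points demanding care are invoking non-degeneracy to secure $\xh\neq 0$ (both to apply Lemma~\ref{lemma:unitary.2x2} and to ensure the eigenvalues of $\phi(X)$ are really $\pm\xh$), and using the evenness of $p$ in the case $\xh=-\xh'$ to match the moduli $|z|$ and $|z'|$ after the swap.
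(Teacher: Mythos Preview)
Your proof is correct and follows essentially the same approach as the paper: the forward direction invokes Lemma~\ref{lemma:unitary.2x2} after noting non-degeneracy forces $\xh\neq 0$, and the reverse direction exhibits an explicit diagonal (and, for $\xh=-\xh'$, swap-then-diagonal) unitary, using evenness of $p$ in the latter case. Your conjugating unitary $\diag(1,e^{i(\theta-\theta')})$ differs from the paper's $\diag(e^{i\theta'},e^{i\theta})$ only by an overall phase, and you are slightly more explicit about the $\xh=-\xh'$ case and about $Z$ following automatically, but none of this is a substantive difference.
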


\begin{proof}
  First, assume that $\phi$ and $\phi'$ are unitarily equivalent
  representations as in Proposition~\ref{prop:2dim.rep.string}. Since
  the diagonal elements of $\phi(X)$ are distinct (due to the
  assumption that $\phi$ is non-degenerate) one may apply
  Lemma~\ref{lemma:unitary.2x2} to conclude that either $\xh'=\xh$ or
  $\xh'=-\xh$.

  Next, assume that $\phi$ and $\phi'$ are representations as in
  Proposition~\ref{prop:2dim.rep.string} such that
  $\xh'=\xh$ and
  \begin{align*}
    &\phi(Y) = \frac{1}{\hbar}\sqrt{2\xh^2-\hbar^2p(\xh)}
    \begin{pmatrix}
      0 & e^{i\theta} \\ e^{-i\theta} & 0
    \end{pmatrix}\\
    &\phi'(Y) = \frac{1}{\hbar}\sqrt{2\xh^2-\hbar^2p(\xh)}
    \begin{pmatrix}
      0 & e^{i\theta'} \\ e^{-i\theta'} & 0
    \end{pmatrix}.                                       
  \end{align*}
  It is easy to see that these matrices are indeed unitarily
  equivalent with $\phi'(Y)=U\phi(Y)U^\dagger$ for
  $U=\diag(e^{i\theta'},e^{i\theta})$. The argument for $\xh'=-\xh$ is
  analogous and uses the fact that $p(-\xh)=p(\xh)$.
\end{proof}

\noindent
The result for representations of Type II is similar.

\begin{proposition}\label{prop:2d.typeII.equivalence}
  Let $\phi$ and $\phi'$ be representations as in
  Proposition~\ref{prop:2d.typeII} with
  \begin{alignat*}{2}
    &\phi(X) =
    \begin{pmatrix}
      \xh & 0 \\ 0 & -\xh
    \end{pmatrix} &\qquad
    &\phi(Y) =
    \begin{pmatrix}
      y & z \\ \bar{z} & y
    \end{pmatrix}\\    
    &\phi'(X) =
    \begin{pmatrix}
      \xh' & 0 \\ 0 & -\xh'
    \end{pmatrix} &\qquad
    &\phi'(Y) =
    \begin{pmatrix}
      y' & z' \\ \bar{z}' & y'
    \end{pmatrix}.    
  \end{alignat*}
  Then $\phi$ is unitarily equivalent to $\phi'$ if and only if
  $\xh=\pm\xh'$ and $y=y'$.
\end{proposition}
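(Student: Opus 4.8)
The plan is to mirror the proof of Proposition~\ref{prop:2d.typeI.equivalence}, using Lemma~\ref{lemma:unitary.2x2} as the central tool and exploiting that $p$ is even. First I would record what Proposition~\ref{prop:2d.typeII} fixes. Since $\phi$ and $\phi'$ are representations of the same algebra $\Cgh$ (so the same $\hbar$), and since $p(-\xh)=p(\xh)$, both $|y|$ and $|z|$ are determined by $\xh^2$ alone; thus $\xh'=\pm\xh$ already forces $|z'|=|z|$ and $|y'|=|y|$. Writing $z=|z|e^{i\theta}$ and $z'=|z|e^{i\theta'}$, the only data beyond $\xh$ are the common sign of $y$ and the phase $\theta$.

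For the forward direction, suppose $U\phi(X)U^\dagger=\phi'(X)$ and $U\phi(Y)U^\dagger=\phi'(Y)$. Since $\phi$ is non-degenerate the diagonal entries $\xh,-\xh$ of $\phi(X)$ are distinct, so Lemma~\ref{lemma:unitary.2x2} applies. Matching the two possible forms of $\tilde{X}=\phi'(X)=\diag(\xh',-\xh')$ against $\diag(\xh,-\xh)$ and $\diag(-\xh,\xh)$ yields $\xh'=\xh$ (order preserved) or $\xh'=-\xh$ (order reversed). In either case the lemma transports the diagonal entries of $\phi(Y)$, possibly swapping them, onto those of $\phi'(Y)$; but these diagonal entries are all equal to $y$, so this forces $y'=y$. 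This is the key point: unitary equivalence cannot flip the sign of $y$, so $y=y'$ is genuinely necessary and not merely $|y|=|y'|$.

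For the converse, assume $\xh'=\pm\xh$ and $y'=y$, whence $|z'|=|z|$ and $|y'|=|y|$ as noted above. If $\xh'=\xh$, then $\phi(X)=\phi'(X)$ and any conjugation fixing $\diag(\xh,-\xh)$ must be diagonal; taking $U=\diag(e^{i\theta'},e^{i\theta})$ multiplies the off-diagonal entry of $\phi(Y)$ by $e^{i(\theta'-\theta)}$, sending $z$ to $z'$ while fixing $y$, exactly as in Proposition~\ref{prop:2d.typeI.equivalence}. If $\xh'=-\xh$, I would instead take the anti-diagonal unitary $U=\left(\begin{smallmatrix}0 & e^{i\alpha}\\ e^{i\beta} & 0\end{smallmatrix}\right)$, which swaps $\xh$ and $-\xh$, so that $U\phi(X)U^\dagger=\diag(-\xh,\xh)=\phi'(X)$, and which conjugates $\phi(Y)$ to a matrix with diagonal $y$ and upper-right entry $e^{i(\alpha-\beta)}\bar{z}$; choosing $\alpha-\beta=\theta+\theta'$ matches this with $z'$. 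Here the evenness $p(-\xh)=p(\xh)$ is precisely what guarantees that the target $\phi'$ with parameter $-\xh$ carries the same $|y|$ and $|z|$, so the construction closes up.

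I do not expect a serious obstacle; the computation is routine two-by-two linear algebra. The only points requiring care are, on the converse side, verifying that the anti-diagonal unitary simultaneously realizes the $X$-swap and (after fixing the phase) the prescribed $z'$, and, on the necessity side, being explicit that Lemma~\ref{lemma:unitary.2x2} preserves the diagonal entries of $Y$ up to permutation, which is exactly what excludes $y'=-y$.
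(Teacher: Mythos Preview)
Your proposal is correct and follows essentially the same approach as the paper, which simply states that the argument is in complete analogy with the Type~I case (Proposition~\ref{prop:2d.typeI.equivalence}) and notes that the sign of $y$ cannot be compensated by a unitary transformation. Your write-up in fact supplies more detail than the paper's own proof, but the strategy---apply Lemma~\ref{lemma:unitary.2x2} for necessity and exhibit explicit diagonal or anti-diagonal unitaries for sufficiency---is identical.
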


\begin{proof}
  The proof is in complete analogy with the proof of
  Proposition~\ref{prop:2d.typeI.equivalence} and will not be repeated
  in detail. The only slight difference is that there is a choice of
  sign in $y$ which can not be compensated for by a unitary
  transformation.
\end{proof}

\subsection{A 3-dimensional representation}

\begin{figure}[h]
  \centering
  \includegraphics[width=55mm]{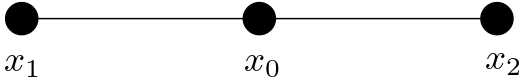}
  \caption{An admissible graph on three vertices.}
  \label{fig:3dstring}
\end{figure}

\noindent
Let us give an example of a 3-dimensional representation with a graph
as in Figure~\ref{fig:3dstring}.  Equations \Aeq{ij} and \Beq{ij} become
\begin{align*}
  \paraa{2(x_0-x_1)&-\hbar^2p'(x_0)}|z_1|^2
                     +\paraa{2(x_0-x_2)-\hbar^2p'(x_0)}|z_2|^2=\hbar^2p(x_0)p'(x_0)\tag{\Aeq{00}}\\
                   &\paraa{2(x_1-x_0)-\hbar^2p'(x_1)}|z_1|^2=\hbar^2p(x_1)p'(x_1)\tag{\Aeq{11}}\\
                   &\paraa{2(x_2-x_0)-\hbar^2p'(x_2)}|z_2|^2=\hbar^2p(x_2)p'(x_2)\tag{\Aeq{22}}\\
                   &x_0 = \frac{1}{2}\qh(x_1,x_2)\tag{\Aeq{12}}\\
                   &2\hbar^2\paraa{|z_1|^2+|z_2|^2}=\rh(x_0,x_1)\tag{\Beq{01}}\\
                   &2\hbar^2\paraa{|z_1|^2+|z_2|^2}=\rh(x_0,x_2)\tag{\Beq{02}}.
\end{align*}
where $z_1=y_{01}$ and $z_2=y_{02}$.  For $x_0=0$, $x_1=-x_2=x$ one
uses $\qh(x,-x)=0$ and $p'(0)=0$ to show that the equations are
equivalent to
\begin{align*}
  &r=|z_1|=|z_2|\tag{\Aeq{00}}\\
  &\paraa{2x-\hbar^2p'(x)}r^2=\hbar^2p(x)p'(x)\tag{\Aeq{11}}\\
  &4\hbar^2r^2 = x^2-\hbar^2\paraa{p(0)+p(x)}\tag{\Beq{01}}.
\end{align*}

\begin{proposition}\label{prop:3d.string.rep}
  If $\xh^2-\hbar^2\paraa{p(0)+p(\xh)}>0$, $\xh\neq 0$ and 
  \begin{align*}
    \parac{\frac{2\xh}{\hbar^2}-p'(\xh)}\parac{\frac{\xh^2}{\hbar^2}-p(0)-p(\xh)}=4p(\xh)p'(\xh)
  \end{align*}
  then
  \begin{align*}
    \phi(X) =
    \begin{pmatrix}
      0 & 0 & 0\\
      0 & \xh & 0 \\
      0 & 0 & -\xh
    \end{pmatrix}\quad
    \phi(Y) =
    \begin{pmatrix}
      0 & z_1 & z_2 \\
      \bar{z}_1 & 0 & 0\\
      \bar{z}_2 & 0 & 0
    \end{pmatrix}\quad
                      \phi(Z) =\frac{i\xh}{\hbar}
                      \begin{pmatrix}
                        0 & z_1 & -z_2 \\
                        -\bar{z}_1 & 0 & 0 \\
                        \bar{z}_2 & 0 & 0 
                      \end{pmatrix}
  \end{align*}
  with
  \begin{align*}
    &z_1 = \frac{1}{2\hbar}e^{i\theta_1}\sqrt{\xh^2-\hbar^2\paraa{p(\xh)+p(0)}}\\
    &z_2 = \frac{1}{2\hbar}e^{i\theta_2}\sqrt{\xh^2-\hbar^2\paraa{p(\xh)+p(0)}},
  \end{align*}
  for $\theta_1,\theta_2\in\reals$, define a non-degenerate representation of $\Cgh$.
\end{proposition}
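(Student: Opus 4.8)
The plan is to verify directly that the displayed matrices satisfy the defining relations and then confirm non-degeneracy. Since $\phi(Z)$ is taken to be $[\phi(X),\phi(Y)]/(i\hbar)$, relation \eqref{eq:XYcom} holds automatically and Hermiticity of $\phi(Z)$ is inherited from that of $\phi(X)$ and $\phi(Y)$; it therefore suffices to check \eqref{eq:XYYcom} and \eqref{eq:YXXcom}, i.e.\ the equations \Aeq{ij} and \Beq{ij}. For the chosen structure $x_0=0$, $x_1=\xh$, $x_2=-\xh$, the reduction recorded just before the proposition---which uses $\qh(\xh,-\xh)=0$ and $p'(0)=0$---shows that this whole system collapses to the three scalar identities \Aeq{00}, \Aeq{11} and \Beq{01}. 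Thus the entire verification amounts to these three equations.

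First I would dispose of well-definedness together with \Aeq{00} and \Beq{01}. The hypothesis $\xh^2-\hbar^2\paraa{p(0)+p(\xh)}>0$ guarantees that
\begin{align*}
  r := \frac{1}{2\hbar}\sqrt{\xh^2-\hbar^2\paraa{p(\xh)+p(0)}}
\end{align*}
is a positive real number, so $z_1$ and $z_2$ are well-defined with $|z_1|=|z_2|=r$; this is exactly the content of \Aeq{00}. Equation \Beq{01}, which reads $4\hbar^2 r^2=\xh^2-\hbar^2\paraa{p(0)+p(\xh)}$, then holds by the very definition of $r$.

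The only identity carrying genuine content is \Aeq{11}, namely $\paraa{2\xh-\hbar^2 p'(\xh)}r^2=\hbar^2 p(\xh)p'(\xh)$, and this is where the cubic hypothesis enters. Substituting the value of $r^2$ fixed by \Beq{01} and rearranging turns \Aeq{11} into precisely
\begin{align*}
  \parac{\frac{2\xh}{\hbar^2}-p'(\xh)}\parac{\frac{\xh^2}{\hbar^2}-p(0)-p(\xh)}=4p(\xh)p'(\xh),
\end{align*}
which is the stated assumption; hence \Aeq{11} holds. I would also remark that \Aeq{22} and \Beq{02} are equivalent to \Aeq{11} and \Beq{01} via $p(-\xh)=p(\xh)$ and $p'(-\xh)=-p'(\xh)$, so they impose no new constraint.

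Finally I would confirm the form of $\phi(Z)$ and non-degeneracy. Computing the commutator entrywise through $[\phi(X),\phi(Y)]_{ij}=(x_i-x_j)y_{ij}$ and dividing by $i\hbar$ reproduces the displayed matrix $\phi(Z)$. Since $\xh\neq 0$ by hypothesis and $z_1\neq 0$ (because $r>0$), the entry $\phi(Z)_{01}=\tfrac{i\xh}{\hbar}z_1$ is nonzero, so $\phi(Z)\neq 0$ and the representation is non-degenerate. The computation is essentially routine; the only step demanding care is the algebraic bookkeeping in \Aeq{11}, where one must track the powers of $\hbar$ correctly to recognize the resulting relation as the stated assumption, while noting that the positivity hypothesis is exactly what simultaneously makes $r$ real and forces $\phi(Z)\neq 0$.
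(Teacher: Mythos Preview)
Your proof is correct and follows exactly the route the paper sets up: it relies on the reduction, recorded immediately before the proposition, of the full system \Aeq{ij}, \Beq{ij} to the three scalar identities \Aeq{00}, \Aeq{11}, \Beq{01}, and then observes that \Beq{01} fixes $r^2$, \Aeq{00} forces $|z_1|=|z_2|$, and substituting $r^2$ into \Aeq{11} gives precisely the displayed hypothesis on $\xh$. The paper does not spell out a proof for this proposition beyond that reduction, so your verification is the natural completion of its argument.
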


\noindent
There are several ways of satisfying the requirements of
Proposition~\ref{prop:3d.string.rep}, and let us give a particular
construction in the next result. To this end, let us introduce the following notation
\begin{align*}
  &\fh(x) = \parac{\frac{2x}{\hbar^2}-p'(x)}\parac{\frac{x^2}{\hbar^2}-p(0)-p(x)}-4p(x)p'(x)\\
  &\rh(x) = \frac{x^2}{4\hbar^2}-\frac{1}{4}\paraa{p(0)+p(x)}.
\end{align*}

\begin{proposition}\label{prop:3d.hbar.solution}
  There exists $\hbar>0$ such that
  \begin{align*}
    \fh(g-1)=0\mathand \rh(g-1)>0.
  \end{align*}
\end{proposition}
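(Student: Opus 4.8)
The plan is to fix $x=g-1$ and treat $\hbar$ — equivalently the parameter $s=1/\hbar^2$ ranging over $(0,\infty)$ — as the free variable, reducing the claim to a one-variable intermediate value argument. The first step is to record two explicit evaluations that drive everything. Since $g\geq 2$, the integer $g-1$ lies in $\{1,\dots,g\}$, so the factor $\bigl(x^2-(g-1)^2\bigr)$ of $G(x^2)$ vanishes at $x=g-1$ and hence $p(g-1)=-\sqrt{c}$; moreover $p'(g-1)<0$ by \eqref{eq:pp.integer.value}. Substituting $p(g-1)=-\sqrt{c}$ gives
\begin{align*}
  p(0)+p(g-1)=p(0)-\sqrt{c},
\end{align*}
which is negative by Lemma~\ref{lemma:p.zero.c}.

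From this, the inequality $\rh(g-1)>0$ is immediate and holds for \emph{every} $\hbar>0$: indeed
\begin{align*}
  \rh(g-1)=\frac{(g-1)^2}{4\hbar^2}-\tfrac{1}{4}\bigl(p(0)+p(g-1)\bigr)
\end{align*}
is a sum of the strictly positive term $(g-1)^2/(4\hbar^2)$ and $-\tfrac14\bigl(p(0)-\sqrt{c}\bigr)>0$. Thus the only real content is the equation $\fh(g-1)=0$, and once a suitable $\hbar$ is produced the second condition comes for free.

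For the equation I would write $\fh(g-1)$ as a polynomial in $s=1/\hbar^2$. Using $p(g-1)=-\sqrt{c}$ it becomes
\begin{align*}
  \fh(g-1)=\bigl(2(g-1)s-p'(g-1)\bigr)\bigl((g-1)^2 s-p(0)+\sqrt{c}\bigr)+4\sqrt{c}\,p'(g-1),
\end{align*}
a quadratic in $s$ with leading coefficient $2(g-1)^3>0$. The two endpoint estimates are then straightforward: as $s\to\infty$ (that is, $\hbar\to 0^+$) the quadratic tends to $+\infty$; while at $s=0$ (that is, $\hbar\to\infty$) one computes
\begin{align*}
  \fh(g-1)\big|_{s=0}=p'(g-1)\bigl(3\sqrt{c}+p(0)\bigr)<0,
\end{align*}
since $p'(g-1)<0$ and $3\sqrt{c}+p(0)>0$ by Lemma~\ref{lemma:p.zero.c}. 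As $\fh(g-1)$ is continuous in $s$, the intermediate value theorem yields some $s_0\in(0,\infty)$ with $\fh(g-1)=0$; setting $\hbar=s_0^{-1/2}>0$ would finish the argument, and $\rh(g-1)>0$ then holds by the previous paragraph.

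There is no genuinely hard step here: the whole proof is an intermediate value argument in the single parameter $1/\hbar^2$. The one point that must not be overlooked is the observation $p(g-1)=-\sqrt{c}$, which is exactly what turns the two endpoint sign computations into direct consequences of Lemma~\ref{lemma:p.zero.c} (through $p(0)-\sqrt{c}<0$ and $p(0)+3\sqrt{c}>0$). If one instead kept $p(g-1)$ symbolic, the sign of the value at $s=0$ would be unclear, so identifying $g-1$ as an integer zero of the polynomial part of $p$ is the crux.
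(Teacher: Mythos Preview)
Your argument is correct and follows essentially the same route as the paper: both fix $x=g-1$, use $p(g-1)=-\sqrt{c}$ and $p'(g-1)<0$, and apply the intermediate value theorem in the variable $1/\hbar^2$, with the endpoint signs coming from Lemma~\ref{lemma:p.zero.c} via $p(0)+3\sqrt{c}>0$ (for the constant term) and the positive leading coefficient $2(g-1)^3$ (for the behaviour as $\hbar\to 0^+$), while $\rh(g-1)>0$ for all $\hbar$ follows from $p(0)-\sqrt{c}<0$. The only cosmetic difference is that the paper first expands $\fh(x)$ for general $x$ before specialising, whereas you substitute $x=g-1$ from the outset.
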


\begin{proof}
  Expanding $\fh$ gives
  \begin{align*}
    \fh(x) = \frac{2x^3}{\hbar^4}-\frac{1}{\hbar^2}\paraa{p(0)+p(x)+p'(x)}+p'(x)\paraa{p(0)-3p(x)} 
  \end{align*}
  If $x>0$ then $\fh(x)>0$ for small enough $\hbar$, and if
  $p'(x)\paraa{p(0)-3p(x)}<0$ then $\fh(x)<0$ for large enough
  $\hbar$. Thus, for such an $x$, there exists $\hbar>0$ such that
  $\fh(x)=0$. Let us now show that $x=g-1$ fulfills
  \begin{align*}
    p'(g-1)\paraa{p(0)-3p(g-1)}=p'(g-1)\paraa{p(0)+3\sqrt{c}}<0.
  \end{align*}
  Since $p'(g-1)<0$ the above is equivalent to $p(0)+3\sqrt{c}>0$
  which is true by Lemma~\ref{lemma:p.zero.c}. Next, let us show that
  $\rh(g-1)>0$ for all $\hbar>0$:
  \begin{align*}
    \rh(g-1)=\frac{(g-1)^2}{4\hbar^2}-\frac{1}{4}\paraa{p(0)-\sqrt{c}}>0
  \end{align*}
  since Lemma~\ref{lemma:p.zero.c} gives $p(0)-\sqrt{c}<0$.
\end{proof}

\noindent
Thus, for $\xh=g-1$ and $\hbar$ as in
Proposition~\ref{prop:3d.hbar.solution} we find that
\begin{align*}
  &z_1 = \frac{1}{2\hbar}e^{i\theta_1}\sqrt{(g-1)^2-\hbar^2
    \paraa{(-1)^g\alpha(g!)^2-2\sqrt{c}}}\\
  &z_2 = \frac{1}{2\hbar}e^{i\theta_2}\sqrt{(g-1)^2-\hbar^2
    \paraa{(-1)^g\alpha(g!)^2-2\sqrt{c}}}.
\end{align*}
Furthermore, it is clear that the phases $\theta_1$ and $\theta_2$ are
inessential as one may always find an equivalent representation with
$\theta_1=\theta_2=0$ by conjugating with a diagonal unitary matrix.
Finally, let us show that these representations are irreducible.

\begin{proposition}
  If $\phi$ is a representation of $\Cgh$ as in
  Proposition~\ref{prop:3d.string.rep} then $\phi$ is irreducible.
\end{proposition}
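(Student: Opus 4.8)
The plan is to show irreducibility by a standard argument: a finite-dimensional representation is irreducible if and only if the only matrices commuting with all of $\phi(X),\phi(Y),\phi(Z)$ are scalar multiples of the identity. Since the representation is $3$-dimensional and non-degenerate, I would exploit the specific structure of the matrices given in Proposition~\ref{prop:3d.string.rep}, together with the fact that $\xh\neq 0$ and $z_1,z_2\neq 0$ (the latter following from $\xh^2-\hbar^2(p(\xh)+p(0))>0$).

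\begin{proof}
Since the algebra generated by a representation is closed under hermitian conjugation, every reducible representation is completely reducible; hence it suffices to show that the only hermitian matrix commuting with $\phi(X)$, $\phi(Y)$ and $\phi(Z)$ is a scalar multiple of the identity. Let $A$ be such a matrix. Since $\phi(X)=\diag(0,\xh,-\xh)$ with $\xh\neq 0$, the three diagonal entries are distinct, so any matrix commuting with $\phi(X)$ must itself be diagonal; write $A=\diag(a_0,a_1,a_2)$. Now impose $[A,\phi(Y)]=0$. Computing the $(0,1)$ and $(0,2)$ entries of the commutator gives $(a_0-a_1)z_1=0$ and $(a_0-a_2)z_2=0$. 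Recalling that $z_1,z_2\neq 0$ (which holds since $\xh^2-\hbar^2(p(\xh)+p(0))>0$), this forces $a_0=a_1$ and $a_0=a_2$, so $A=a_0\,\mathds{1}$. Thus the commutant is trivial and $\phi$ is irreducible.
\end{proof}

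The main conceptual point is simply that the graph of $\phi(Y)$ (Figure~\ref{fig:3dstring}) is connected, so the non-vanishing off-diagonal entries $z_1,z_2$ propagate the single scalar $a_0$ to all diagonal entries of $A$; there is no genuine obstacle here, only the need to verify the non-vanishing of $z_1$ and $z_2$, which is guaranteed by the hypothesis $\xh^2-\hbar^2(p(\xh)+p(0))>0$ carried over from Proposition~\ref{prop:3d.string.rep}. One could alternatively invoke Proposition~\ref{prop:disconnected.graph.irreducible} in reverse spirit, but the direct commutant computation above is cleanest and does not require checking that $\phi(Z)$ adds any constraint beyond $\phi(X)$ and $\phi(Y)$.
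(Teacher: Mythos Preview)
Your proof is correct and complete. The commutant computation is clean: the distinct eigenvalues of $\phi(X)$ force any commuting matrix to be diagonal, and the non-vanishing of $z_1,z_2$ then propagates a single scalar along the two edges of the graph.

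The paper takes a different route. Rather than computing the commutant, it argues by contradiction through the graph machinery: if $\phi$ were reducible it would be completely reducible, hence unitarily equivalent to a representation whose graph is disconnected. Since the eigenvalues $0,\xh,-\xh$ of $\phi(X)$ are distinct, any unitary $U$ with $U\phi(X)U^\dagger$ diagonal is a product of a permutation matrix and a diagonal unitary; such a $U$ can only permute vertices and rephase edges of the graph of $\phi(Y)$, so the graph of $U\phi(Y)U^\dagger$ is isomorphic to the original (connected) one, a contradiction. Your Schur-type argument is more elementary and self-contained, while the paper's argument is chosen to keep the discussion within the graph framework developed earlier and to emphasise that connectivity of the graph, together with simplicity of the spectrum of $\phi(X)$, is what drives irreducibility. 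Both arguments ultimately hinge on the same two facts: $\xh\neq 0$ and $z_1,z_2\neq 0$.
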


\begin{proof}
  If $\phi$ is reducible, then $\phi$ is completely reducible to a sum
  of lower dimensional representations, which implies that $\phi$ is
  equivalent to a representation with at least two disconnected
  components in the corresponding graph. Since $\xh\neq 0$ the three
  eigenvalues of $X$ are distinct, which implies that unitary
  matrices $U$, such that $UXU^\dagger$ is again diagonal, are
  composed of permutations and diagonal unitary matrices. Hence, the
  graph of $Y$ is isomorphic to the graph of $UYU^\dagger$, which
  implies that $UYU^\dagger$ is connected. We conclude that $\phi$ is
  irreducible.
\end{proof}

\section{Concluding remarks}

\noindent Finding matrix regularizations for surfaces of higher genus
seems to be a notoriously difficult problem. The simple structure of
representations in the case of genus 0 and 1, does not lend itself to
any easy generalizations. In this paper, we have followed the idea of
finding representations of a one-parameter family of deformations of a
commutative algebra (corresponding to smooth functions on the
surface), in order to generate a matrix regularization. These algebras
have a concrete definition in terms of generators and relations in
direct correspondence with the definition of surfaces as level sets in
$\reals^3$. Explicit low-dimensional representations have been
constructed, but ideally one would like a complete sequence of matrix
algebras (of increasing dimension) for a sequence of the deformation
parameter $\hbar$ tending to zero. Although we have not reached our
final goal, we believe that our investigation has considerably
increased the understanding of representations, as well as the
analytic structure of the defining polynomials, paving the way for
future work.

\section*{Acknowledgments}

\noindent
J.A would like to thank M. Aigner and A. Sykora for discussions and
the Swedish Research Council for financial support.

\bibliographystyle{alpha}
\bibliography{references}  

\end{document}